\tikzset{>=stealth}
\theoremstyle{plain}
\newtheorem{lemma}{Lemma}
\newtheorem{theorem}{Theorem}
\newtheorem{proposition}{Proposition}
\theoremstyle{definition}
\newcommand{\reals}{\mathbb{R}}
\renewcommand{\leq}{\leqslant}
\renewcommand{\geq}{\geqslant}
\newcommand{\vect}[1]{\mbox{\boldmath$#1$}}
\DeclareMathOperator{\BF}{BF}
\DeclareMathOperator{\rank}{rank}
\DeclareMathOperator{\supp}{supp}
\DeclareMathOperator{\mr}{mr}
\DeclareMathOperator{\pt}{pt}
\DeclareMathOperator{\ppt}{ppt}
\let\leq\leqslant
\let\geq\geqslant
\newcommand{\Vr}{V^{(r)}}
\newcommand{\Er}{E^{(r)}}
\newcommand{\Sr}{S^{(r)}}
\title{Minimum rank and zero forcing number for butterfly networks}
\author[D. Ferrero]{Daniela Ferrero$^1$}
\address{$^1$Department of Mathematics, Texas State University, San Marcos, U.S.A.}
\author[C. Grigorious]{Cyriac Grigorious$^{2,3}$}  
\author[T. Kalinowski]{Thomas Kalinowski$^{2,4}$}
\author[J. Ryan]{Joe Ryan$^2$}
\author[S. Stephen]{Sudeep Stephen$^{2,5}$}
\address{$^2$School of Mathematical and Physical Sciences, University of Newcastle, Callaghan, Australia}
\address{$^3$Graduate School, King’s College London, London, UK}
\address{$^4$School of Science and Technology, University of New England, Armidale, Australia}
\address{$^5$School of Mathematical Sciences, National Institute of Science Education and Research,
  Bhubaneswar, India}
\email{dferrero@txstate.edu}
\email{cyriac.grigorious@uon.edu.au}
\email{tkalinow@une.edu.au}
\email{joe.ryan@newcastle.edu.au}
\email{sudeep.stephen@niser.ac.in}
\date{}
\keywords{zero forcing, minimum rank of graphs, butterfly network}
\subjclass[2010]{05C96, 05C57, 94C15}
\begin{document}

\begin{abstract}
  Zero forcing is a graph propagation process introduced in quantum physics and theoretical computer
  science, and closely related to the minimum rank problem. The minimum rank of a graph is the
  smallest possible rank over all matrices described by a given network. We use this relationship to
  determine the minimum rank and the zero forcing number of butterfly networks, concluding they
  present optimal properties in regards to both problems.
\end{abstract}

\maketitle

\section{Introduction}\label{sec:intro}

Let $G= (V,E)$ be a finite simple graph. Starting with a subset of the vertex set $V$ colored, we
consider the following coloring rule: an uncolored vertex is colored if it is the only uncolored
neighbor of some colored vertex. A vertex set $S\subseteq V$ is called \emph{zero-forcing} if,
starting with the vertices in $S$ colored and the vertices in the complement $V\setminus S$
uncolored, all the vertices can be colored by repeatedly applying the coloring rule. The minimum
cardinality of a zero-forcing set for the graph $G$ is called the \emph{zero-forcing number} of $G$,
denoted by $Z(G)$.

Zero forcing was introduced in linear algebra to study the problem of finding the minimum rank among
all symmetric matrices described by a graph~\cite{AIM-2008-Zeroforcingsets}. Rank minimization
problems consist of determining the minimum rank among all matrices whose off-diagonal zero-nonzero
pattern is determined by the edges of a graph. This problem is related to the inverse eigenvalue
problem~\cite{FallatHogben-2007-minimumranksymmetric} and with many problems in engineering
involving propagation of a signal through a network~\cite{Fazel2004}. Minimizing the rank of a
matrix is equivalent to maximizing its nullity, and the zero forcing number gives an upper bound for
the maximum nullity~\cite{AIM-2008-Zeroforcingsets}. For that reason, graphs in which the maximum
nullity coincides with their zero forcing number are of special interest~\cite{AIM-2008-Zeroforcingsets}.

Independently, the concept of zero forcing was also introduced in quantum physics, electrical
engineering and theoretical computer science. In physics, zero forcing was introduced to study
control of quantum systems and it is called graph infection~\cite{Burgarth2007,Severini2008}. The power domination problem in graph
theory~\cite{Haynes2002a} appeared in the study of the placement of monitoring units in electrical
power networks~\cite{Baldwin1993}, and it has been proven to be equivalent to the zero forcing
problem~\cite{Benson.etal_2015_Powerdominationzero}. Finally, the concept of zero forcing was
introduced as the fast-mixed search model for the study of fugitive search games on graphs. In
fugitive search games, a group of searchers, placed on the vertices of a graph, must find a fugitive
that is hiding in the vertices or edges of the graph~\cite{Dendris1997}. The different games are
determined by the allowed moves for the searchers and the fugitive. Depending on them, the minimum
number of required searchers reveals different graph properties~\cite{Bienstock1991}.

The fast-mixed model for graph search was introduced by Yang~\cite{Yang2013} as a
combination of the fast method~\cite{Dyera} and the mixed search method~\cite{Bienstock1991a}. The
fast-mixed number of a graph is the minimum number of searchers required to find a fugitive in the
graph and it coincides with the zero forcing number.

In this paper, we prove that butterfly interconnection networks have optimal minimum rank and zero
forcing properties, and as a consequence, optimal fast-search number. The interest on this
particular family of graphs is that they provide an excellent model for interconnection networks
where search problems are used to detect faulty nodes or false information~\cite{Dobrev2006,Kirousis2000}.

In order to make a more precise statement about the relation between the zero forcing number and the
minimum rank problem, let $S_n(F)$ denote the set of symmetric $n \times n$ matrices over a field
$F$. For a simple graph $G=(V,E)$ with vertex set $V=\{1,\dotsc,n\}$, let $S(F,G)$ be the set of
matrices in $S_n(F)$ whose non-zero off-diagonal entries correspond to edges of $G$, that is,
\[S(F,G)=\{A \in S_{n}(F)\ :\ i\neq j\implies \left(ij\in E(G)\iff a_{ij}\neq 0\right)\}.\]  
The minimum $F$-rank of $G$ is defined as the minimum rank over all matrices $A$ in $S(F,G)$:
\[\mr^F(G)=\min\left\{\rank(A)\ :\ A\in S(F,G)\right\}.\]
If the index $F$ is omitted then it is understood that $F=\reals$. The link between the zero forcing
number and the minimum rank problem is established by the observation that for a zero-forcing set
$S$ and a matrix $A\in S(F,G)$, the rows of $A$ that correspond to the vertices in $V\setminus S$
must be linearly independent, so $\rank(A)\geqslant n-\lvert S\rvert$, and consequently
\begin{equation}\label{eq:Zfnullity}
\mr^F(G)\geqslant n-Z(G).
\end{equation}
Based on this insight, the authors of~\cite{AIM-2008-Zeroforcingsets} determined $\mr(G)$ for
various graph classes and established equality in~\eqref{eq:Zfnullity}, independent of the field
$F$, in many cases. In~\cite{HuangChangYeh2010}, the same is proved for block-clique graphs and unit
interval graphs. Recently, the zero forcing number of cartesian products of cycles was established
by constructing a matrix in $S(F,G)$ with the required
rank~\cite{Benson.etal_2015_Powerdominationzero}. The American Institute for Mathematics maintains
the minimum rank graph catalog~\cite{HogbenBarrettGroutHolstRasmussenSmith.-2016-AIMminimumrank} in
order to collect known results about the minimum rank problem for various graph classes.

The rest of the paper is structured as follows. Section~\ref{sec:notation} contains some notation
and a precise statement of our main result. In Section~\ref{sec:upper_bound}, we prove an upper bound
for the zero-forcing number of the butterfly network by an explicit construction of the
corresponding zero forcing set $S$. By~\eqref{eq:Zfnullity}, this implies a lower bound for the
minimum rank of the butterfly network, and in Section~\ref{sec:upper_bound} we establish that this
bound is tight by showing that the rows of the adjacency matrix corresponding to the vertices in
the complement of the zero-forcing set span the row space of the adjacency matrix of the butterfly
network (over any field $F$).

\section{Notation and main result}\label{sec:notation}

Let $G= (V,E)$ be a finite simple graph. For a vertex $v \in V$, the open neighborhood of $v$ is the
set $N(v) = \{u : uv \in E(G)\}$ and the closed neighborhood of $v$ is the set
$N[v] = N(v) \cup \{v\}$. We denote by $I_n$ the $n \times n$ identity matrix, and we use $I$ for
$I_n$ when the order $n$ is clear from the context.

For a positive integer $r$, the butterfly network $\BF(r)=\left(V^{(r)},\,E^{(r)}\right)$ has vertex
set $\Vr =\Vr_0\cup\Vr_1\cup\cdots\cup\Vr_r$ and edge set $\Er =\Er_1\cup\Er_2\cup\cdots\cup\Er_r$,
where
\begin{align*}
\Vr_i&=\left\{(\vect x,i)\ :\ \vect x\in\{0,1\}^r\right\}&&\text{for }i=0,1,\dotsc,r,\\
\Er_i&=\left\{\left\{(\vect x,i-1),\,(\vect y,i)\right\}\ :\ \vect x\in\{0,1\}^r,\,\vect y\in\{\vect
  x,\vect x+\vect e_i\}\right\}&&\text{for }i=1,2,\dotsc,r.
\end{align*}
Here addition is modulo 2, and $\vect e_i$ is the binary vector of length $r$ with a one in position
$i$ and zeros in all other components. For convenience, we identify the binary vector
$\vect x=(x_1,\dotsc,x_r)\in\{0,1\}^r$ with the number $x_12^0+x_22^1+\dots+x_r2^{r-1}$. Using this
identification the butterfly network $\BF(4)$ is shown in Figure~\ref{fig:butterfly}.
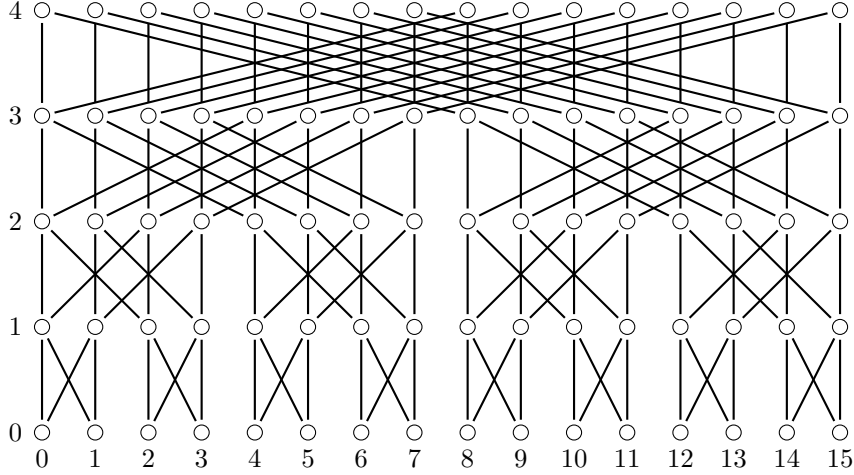
\begin{figure}[htb]
  \centering
  \begin{tikzpicture}[scale=.7]
  \foreach \x in {0,...,15}
    \foreach \y in {0,...,4} 
       {\pgfmathtruncatemacro{\label}{\x + 16 * \y}
       \node [circle,draw,fill=none,inner sep=2pt,outer sep = 2pt]  (\label) at (\x,2*\y) {};} 

  \foreach \x in {0,...,15}
    \foreach \y in {0,...,3}  
      {\pgfmathtruncatemacro{\tail}{\x + 16 * \y}
      \pgfmathtruncatemacro{\head}{\tail + 16}
      \draw[thick] (\tail)--(\head);}    
    
  \foreach \x in {0,...,7}  
     \foreach \k in {0,1}  
       {\pgfmathtruncatemacro{\tail}{2 * \x + \k} 
       \pgfmathtruncatemacro{\head}{\tail + 17 - 2 * \k}
       \draw[thick] (\tail)--(\head);
       \pgfmathtruncatemacro{\tail}{48 + \x + 8 * \k} 
       \pgfmathtruncatemacro{\head}{\tail + 24 - 16 * \k}
       \draw[thick] (\tail)--(\head);
       }
  
  \foreach \x in {0,...,3}  
     \foreach \k in {0,1}
       \foreach \l in {0,1}
         {\pgfmathtruncatemacro{\tail}{16 + 4 * \x + 2*\k + \l} 
         \pgfmathtruncatemacro{\head}{\tail + 18 - 4 * \k}
         \draw[thick] (\tail)--(\head);}
 
  \foreach \x in {0,1}  
     \foreach \k in {0,1}
       \foreach \l in {0,...,3}
         {\pgfmathtruncatemacro{\tail}{32 + 8 * \x + 4*\k + \l} 
         \pgfmathtruncatemacro{\head}{\tail + 20 - 8 * \k}
         \draw[thick] (\tail)--(\head);}
  
  \foreach \x in {0,...,15} \node [draw=none] (label\x) at (\x,-.5) {\x};
  \foreach \y in {0,...,4} \node [draw=none] (label\y) at (-.5,2*\y) {\y};
\end{tikzpicture}
  \caption{The butterfly network $\BF(4)$. Note the recursive structure: the bottom two rows consist of eight
      copies of $\BF(1)$, the bottom three rows of four copies of $\BF(2)$, and the bottom four rows of two copies of $\BF(3)$.}\label{fig:butterfly}
\end{figure}
As an example, the edges $\{(14,1),\,(14,2)\}$ and $\{(14,1),\,(12,2)\}$ in
Figure~\ref{fig:butterfly} correspond to 
$\{((0,1,1,1),1),\,((0,1,1,1),2)\}\in E_2^{(4)}$ and
$\{((0,1,1,1),1),\,((0,0,1,1),2)\}\in E_2^{(4)}$. Note, that the order and the size of $\BF(r)$ are
given by
\begin{align*}
  \left\lvert\Vr\right\rvert&=\sum_{i=0}^r\left\lvert\Vr_i\right\rvert=(r+1)2^r,&
  \left\lvert\Er\right\rvert&=\sum_{i=1}^r\left\lvert\Er_i\right\rvert=r2^{r+1}.
\end{align*}

Our main result is the following theorem.
\begin{theorem}\label{thm:main_result}
The minimum rank of the butterfly network $\BF(r)$ over any field $F$ equals
\[\mr^{F}\left(\BF(r)\right) =\frac29\left[(3r+1)2^{r}-(-1)^r\right],\]
and this is equal to the rank of the adjacency matrix of $\BF(r)$. Furthermore, for the butterfly
network we have equality in~\eqref{eq:Zfnullity}, i.e.,
\[Z\left(\BF(r)\right)=(r+1)2^r-\mr^F\left(\BF(r)\right)=\frac19\left[(3r+7)2^r+2(-1)^r\right].\] 
\end{theorem}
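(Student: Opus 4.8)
The plan is to prove the theorem in two halves: first establish an upper bound on the zero-forcing number $Z(\BF(r))$ by explicitly constructing a zero-forcing set $S$, and second show that the rank of the adjacency matrix $A$ of $\BF(r)$ equals $(r+1)2^r-\lvert S\rvert$. Combining these with the general inequality~\eqref{eq:Zfnullity} pins down all three quantities simultaneously: we get $\mr^F(\BF(r))\geq (r+1)2^r-Z(\BF(r))\geq (r+1)2^r-\lvert S\rvert$, while the matrix-rank computation gives $\mr^F(\BF(r))\leq \rank(A)=(r+1)2^r-\lvert S\rvert$, forcing equality throughout and identifying $Z(\BF(r))=\lvert S\rvert$. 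The arithmetic identity $\lvert S\rvert=\frac19\left[(3r+7)2^r+2(-1)^r\right]$ and its complement $(r+1)2^r-\lvert S\rvert=\frac29\left[(3r+1)2^r-(-1)^r\right]$ would be verified as a routine closed-form calculation at the end.

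For the upper bound on $Z$, I would build the zero-forcing set level by level across the layers $\Vr_0,\dotsc,\Vr_r$. The key structural observation is that $\BF(r)$ is built from a recursive pattern of $2\times 2$ ``butterfly'' cells: each vertex $(\vect x,i-1)$ in layer $i-1$ connects to $(\vect x,i)$ and $(\vect x+\vect e_i,i)$ in layer $i$, and symmetrically each layer-$i$ vertex has two neighbors below it. I would color an initial set consisting of (essentially) the entire bottom layer $\Vr_0$ together with a carefully chosen fraction of vertices in each higher layer, then argue that the coloring rule propagates upward: once a vertex has all but one of its neighbors colored, that last neighbor is forced. Tracking exactly which vertices must be included to keep a forcing chain alive through every layer is what produces the count $\lvert S\rvert$, and the alternating term $(-1)^r$ in the final formula signals that the construction has a parity-dependent structure that must be handled by splitting into even and odd $r$ (or by setting up a recurrence in $r$ whose solution carries the $(-1)^r$).

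For the lower bound on the rank, equivalently the upper bound on $\mr^F$, the strategy is to show that over an \emph{arbitrary} field $F$ the rows of the adjacency matrix indexed by $V\setminus S$ already span the full row space of $A$; this simultaneously proves $\rank(A)=\lvert V\setminus S\rvert=(r+1)2^r-\lvert S\rvert$ and, since $A\in S(F,\BF(r))$, gives $\mr^F(\BF(r))\leq \rank(A)$. Concretely I would express each row of $A$ corresponding to a vertex in $S$ as an explicit $F$-linear combination of rows indexed by $V\setminus S$, exploiting the local $2\times 2$ butterfly adjacency pattern so that dependencies are read off one cell at a time. Because the same combination works over any $F$, field-independence of $\mr^F$ comes for free. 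The main obstacle I anticipate is the bookkeeping in the second half: one must choose $S$ simultaneously to be zero-forcing \emph{and} to leave a linearly independent, spanning complementary row set, and then verify the spanning relations explicitly across all $r$ layers with the correct parity cases. Making the forcing construction and the rank argument use the \emph{same} set $S$ — so that the two bounds meet — is the crux, and the recursive layer structure is the tool I would lean on to manage both at once.
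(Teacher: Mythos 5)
Your overall architecture is exactly the paper's: build an explicit set $S$, show it is zero forcing to get $Z(\BF(r))\leq\lvert S\rvert$, then show the rows of the adjacency matrix indexed by $S$ lie in the span of the rows indexed by $V\setminus S$ over an arbitrary field, and let the two bounds collide through~\eqref{eq:Zfnullity}. That combination logic is correct and is precisely how the paper derives the theorem from Propositions~\ref{prop:lower_bound} and~\ref{prop:upper_bound}. However, everything that makes the theorem true with \emph{these particular constants} is deferred in your proposal: you never construct $S$, never compute $\lvert S\rvert$, and never exhibit the linear dependencies. Those are not routine afterthoughts; they are the entire content. In the paper, $S^{(r)}$ is built from the Jacobsthal numbers $J_n$ ($J_0=0$, $J_1=1$, $J_n=J_{n-1}+2J_{n-2}$): level $i<r$ contains blocks of $J_{i+1}$ consecutive vertices starting at each multiple of $2^{i+1}$, and level $r$ contains $J_{r+1}$ consecutive vertices. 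The $(-1)^r$ in the formula is not a parity case split but simply the root $-1$ of the Jacobsthal characteristic polynomial (your parenthetical guess about a recurrence is the right instinct). Your sketch of the set -- essentially all of $\Vr_0$ plus a fraction of each higher layer, propagating upward -- does not match the structure that achieves the bound: the paper's set is \emph{sparsest} at the bottom (half of $\Vr_0$) and densest at the top ($J_{r+1}\approx\tfrac23\cdot 2^r$ vertices of $\Vr_r$), and the forcing argument is necessarily bidirectional, first propagating from layer $r$ down to layer $0$ (Lemma~\ref{lem:prop_down}) and only then back up (Lemma~\ref{lem:prop_up}). A one-directional upward sweep from a mostly-full bottom layer is not obviously achievable at the optimal cardinality, so this part of your plan would need to be reworked, not merely filled in.

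On the rank half, your idea of reading off dependencies ``one $2\times 2$ cell at a time'' underestimates the bookkeeping. The paper renumbers vertices to expose a block structure $A_r\sim\bigl(\begin{smallmatrix}A_{r-1}&*\\ *&*\end{smallmatrix}\bigr)$ and proves the spanning claim by induction, where the rows coming from the new top block require descending \emph{two} levels of the recursion (from $\BF(r-2)$ to $\BF(r)$, Lemmas~\ref{lem:induction_3} and~\ref{lem:induction_4}) with signed translates of the inductively obtained index sets. The coefficients are all $\pm1$, which is why the argument is field independent -- that part of your reasoning is sound. In short: right skeleton, same route as the paper, but the two hard constructions that produce the formula are missing, and the one construction you do sketch points in a direction that would not reach the stated bound.
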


\section{The upper bound for $Z(\BF(r))$}\label{sec:upper_bound}
Let $(J_n)$ denote the Jacobsthal sequence\footnote{OEIS:\href{http://oeis.org/A001045}{A001045}}
which is defined by $J_0=0$, $J_1=1$ and $J_n=J_{n-1}+2J_{n-2}$ for $n\geqslant 2$. We will need the
relation
\begin{equation}\label{eq:Jacobsthal_bound}
  J_{n+2}=2^n+J_n\quad\text{for every integer }n\geqslant 0,  
\end{equation}
which can be seen as follows. We first use induction on $n$ to verify $J_n+J_{n+1}=2^n$ for all
$n\geq 0$. The base case is $J_0+J_1=1=2^0$, and for $n\geq 1$,
\[J_n+J_{n+1}=J_n+(J_n+2J_{n-1})=2(J_{n-1}+J_{n})=2\cdot 2^{n-1}=2^n.\]
Then~\eqref{eq:Jacobsthal_bound} follows from the recursive definition:
$J_{n+2}=J_{n+1}+2J_n=(J_{n}+J_{n+1})+J_n=2^n+J_n$.

For every $r$, we define a set $\Sr=\Sr_0\cup\Sr_1\cup\cdots\cup\Sr_r$ by
\[\Sr_i = \left\{(x,i)\in\Vr_i\ :\ 2^{i+1}\ell\leqslant x\leqslant
        2^{i+1}\ell+J_{i+1}-1\text{ for some }\ell\in\{0,1,\dots,\lfloor (2^r-1)/2^{i+1}\rfloor\}\right\}  \]
    for $i=0,1,\dotsc,r$. The range for $\ell$ comes from the observation that
    \[\lfloor (2^r-1)/2^{i+1}\rfloor=\max\{\ell\,:\,(2^{i+1}\ell,i)\in\Vr\}.\]
    In the following we will use the phrase ``for some $\ell$'' as a shorthand for ``for some
    non-negative integer $\ell$'', and the largest relevant value of $\ell$ will always be implicit
    in the requirement that a certain $(x,i)$ is a vertex of $\BF(r)$. For $r=4$ the construction of
    the set $\Sr$ is illustrated in Figure~\ref{fig:propagating}.
\begin{figure}[htb]
  \centering
  \begin{tikzpicture}[scale=.7]
  \foreach \x in {0,...,15}
    \foreach \y in {0,...,4} 
       {\pgfmathtruncatemacro{\label}{\x + 16 * \y}
       \node [circle,draw,fill=none,inner sep=2pt,outer sep = 2pt]  (\label) at (\x,2*\y) {};} 

  \foreach \x in {0,...,15}
    \foreach \y in {0,...,3}  
      {\pgfmathtruncatemacro{\tail}{\x + 16 * \y}
      \pgfmathtruncatemacro{\head}{\tail + 16}
      \draw[thick] (\tail)--(\head);}    
    
  \foreach \x in {0,...,7}  
     \foreach \k in {0,1}  
       {\pgfmathtruncatemacro{\tail}{2 * \x + \k} 
       \pgfmathtruncatemacro{\head}{\tail + 17 - 2 * \k}
       \draw[thick] (\tail)--(\head);
       \pgfmathtruncatemacro{\tail}{48 + \x + 8 * \k} 
       \pgfmathtruncatemacro{\head}{\tail + 24 - 16 * \k}
       \draw[thick] (\tail)--(\head);
       }
  
  \foreach \x in {0,...,3}  
     \foreach \k in {0,1}
       \foreach \l in {0,1}
         {\pgfmathtruncatemacro{\tail}{16 + 4 * \x + 2*\k + \l} 
         \pgfmathtruncatemacro{\head}{\tail + 18 - 4 * \k}
         \draw[thick] (\tail)--(\head);}
 
  \foreach \x in {0,1}  
     \foreach \k in {0,1}
       \foreach \l in {0,...,3}
         {\pgfmathtruncatemacro{\tail}{32 + 8 * \x + 4*\k + \l} 
         \pgfmathtruncatemacro{\head}{\tail + 20 - 8 * \k}
         \draw[thick] (\tail)--(\head);}
  
  \foreach \x in {0,...,15} \node [draw=none] (label\x) at (\x,-.5) {\x};
  \foreach \y in {0,...,4} \node [draw=none] (label\y) at (-.5,2*\y) {\y};

  \foreach \x in {0,...,7}
     \node [circle,draw,fill=black,minimum size=8pt]  () at (2*\x,0) {};
  \foreach \x in {0,...,3}
     \node [circle,draw,fill=black,minimum size=8pt]  () at (4*\x,2) {};
  \foreach \y in {0,...,2}
     {\node [circle,draw,fill=black,minimum size=8pt]  () at (\y,4) {};
      \node [circle,draw,fill=black,minimum size=8pt]  () at (8+\y,4) {};}
  \foreach \y in {0,...,4}
     \node [circle,draw,fill=black,minimum size=8pt]  () at (\y,6) {};
  \foreach \y in {0,...,10}
     \node [circle,draw,fill=black,minimum size=8pt]  () at (\y,8) {};
\end{tikzpicture}
  \caption{The set $S^{(4)}$ indicated by filled vertices.}\label{fig:propagating}
\end{figure}
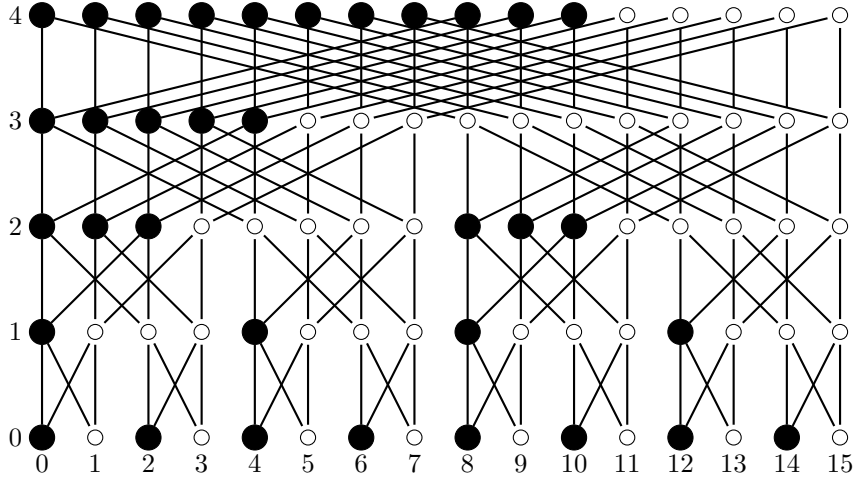

Solving the recurrence relation for the numbers $J_n$, we find $J_n=(2^n-(-1)^n)/3$, and this
implies the following closed form expression for the size of the set $\Sr$.
\begin{lemma}\label{lem:cardinality}
  We have $\displaystyle\left\lvert\Sr\right\rvert=J_{r+1}+\sum_{i=1}^r2^{r-i}J_i=\frac19\left[(3r+7)2^r+2(-1)^r\right]$.\qed
\end{lemma}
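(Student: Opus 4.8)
The plan is to establish the two equalities in turn: first the combinatorial count $\lvert\Sr\rvert = J_{r+1}+\sum_{i=1}^r 2^{r-i}J_i$, and then the closed-form evaluation of this expression.

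For the first equality I would count $\lvert\Sr_i\rvert$ level by level. The level $\Sr_r$ contributes exactly $J_{r+1}$ vertices by definition. For $i\in\{0,\dotsc,r-1\}$ the first coordinate $x$ ranges over $\{0,1,\dotsc,2^r-1\}$, and $\Sr_i$ picks out, in each interval $[2^{i+1}\ell,\,2^{i+1}\ell+J_{i+1}-1]$, a block of $J_{i+1}$ consecutive values. These intervals are indexed by $\ell=0,1,\dotsc,2^{r-i-1}-1$, so there are $2^{r-i-1}$ of them. Solving the recurrence (see below) gives $J_{i+1}\leq 2^{i+1}$, which guarantees both that consecutive blocks do not overlap and that the last block stays within $\{0,\dotsc,2^r-1\}$; hence there is no overcounting and $\lvert\Sr_i\rvert = 2^{r-i-1}J_{i+1}$. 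Summing over $i$ and reindexing by $k=i+1$ turns $\sum_{i=0}^{r-1}2^{r-i-1}J_{i+1}$ into $\sum_{k=1}^r 2^{r-k}J_k$, which yields the first equality.

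For the second equality I would solve $J_n=J_{n-1}+2J_{n-2}$ through its characteristic equation $t^2=t+2$, whose roots are $2$ and $-1$; together with $J_0=0$ and $J_1=1$ this gives $J_n=\tfrac13\bigl(2^n-(-1)^n\bigr)$. Substituting this into the sum and separating the two geometric pieces gives
\[
\sum_{i=1}^r 2^{r-i}J_i = \frac13\sum_{i=1}^r\Bigl(2^r-2^{r-i}(-1)^i\Bigr) = \frac{r2^r}{3}-\frac{2^r}{3}\sum_{i=1}^r\Bigl(-\tfrac12\Bigr)^i.
\]
The remaining sum is geometric with ratio $-\tfrac12$; evaluating it simplifies the right-hand side to $\tfrac19\bigl[(3r+1)2^r-(-1)^r\bigr]$. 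Adding $J_{r+1}=\tfrac13\bigl(2^{r+1}+(-1)^r\bigr)$ over the common denominator $9$ then collects the powers of two into $(3r+7)2^r$ and the sign terms into $2(-1)^r$, giving the claimed value.

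The argument is routine throughout, and the only places needing care are bookkeeping: counting the number of blocks as $2^{r-i-1}$ (not $2^{r-i}$) and invoking $J_{i+1}\leq 2^{i+1}$ to rule out overlaps in the first part, and correctly tracking signs and powers of two in the geometric sum and the final common-denominator step in the second. A clean alternative for the second equality is induction on $r$ using the recurrence directly, but the characteristic-root computation is the most transparent route.
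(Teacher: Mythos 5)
Your proof is correct and follows exactly the route the paper intends: the paper omits the proof entirely (stating only that one solves the recurrence for $J_n$), and your level-by-level count $\lvert\Sr_i\rvert=2^{r-i-1}J_{i+1}$ together with the closed form $J_n=\tfrac13\bigl(2^n-(-1)^n\bigr)$ and the geometric sum supplies precisely the omitted details. Your care about the block count $2^{r-i-1}$ and the non-overlap condition $J_{i+1}\leq 2^{i+1}$ is the only nontrivial point, and you handle it correctly.
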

Next we want to verify that $\Sr$ is a zero forcing set for $\BF(r)$. For this purpose we set
$X_0=\Sr$ and define a sequence $X_1,X_2,\dots,X_{2r}$ of vertex sets by
\begin{align}
  X_k &= X_{k-1}\cup\left\{(x,r-k)\in\Vr_{r-k}\ :\ \{(x,r-k)\}=N(v)\setminus X_{k-1}\text{ for some
    }v=(y,r-k+1)\in X_{k-1}\right\} \label{eq:prop_down}\\
  X_{r+k} &= X_{r+k-1}\cup\left\{(x,k)\in\Vr_{k}\ :\ \{(x,k)\}=N(v)\setminus X_{r+k-1}\text{ for some
    }v=(y,k-1)\in X_{r+k-1}\right\} \label{eq:prop_up}
\end{align}
for $k=1,\dots,r$. These sets can be interpreted in terms of the forcing process as
  follows. The sets $X_0$ to $X_r$ correspond to forcing downwards: $X_0$ is the set of initially colored vertices, $X_1$ is the set of colored vertices
  after the level $r$ vertices in $X_0$ have been used to force vertices on level $r-1$, $X_2$ is
  the set of colored vertices after the level $r-1$ vertices in $X_1$ have been used to force
  vertices on level $r-2$, etc., up to $X_r$, which is the set of colored vertices after the level
  $1$ vertices in $X_{r-1}$ have been used to force vertices on level $0$. Then we turn around and
  force upwards:
  $X_{r+1}$ is the set of colored vertices after the level $0$ vertices in $X_{r}$ have been used
  to force vertices on level $1$, $X_{r+2}$ is the set of colored vertices after the level $1$ vertices in $X_{r+1}$ have been used
  to force vertices on level $2$, etc., up to $X_{2r}$, which is the set of colored vertices after the level
  $r-1$ vertices in $X_{2r-1}$ have been used to force vertices on level $r$. As a consequence, in
  order to prove that $X_0=\Sr$ is a zero forcing set, it is sufficient to prove that
  $X_{2r}=\Vr$. This is the purpose of the following four lemmas. Lemma~\ref{lem:prop_down_aux}
  describes the downward forcing step from level $r-k+1$ to level $r-k$, which is then used inductively in
  Lemma~\ref{lem:prop_down} to obtain an explicit description of the sets $X_0$ to $X_r$. Similarly, Lemma~\ref{lem:prop_up_aux}
  describes the upward forcing step from level $k-1$ to level $k$, which is then used inductively in
  Lemma~\ref{lem:prop_up} to obtain an explicit description of the sets $X_r$ to $X_{2r}$.
\begin{lemma}\label{lem:prop_down_aux}
  Fix $i\in\{0,1,\dots,r-1\}$, and let $Y=Y(i)\cup Y(i+1)\cup\dots\cup Y(r)\subseteq\Vr$ be defined by
  \begin{align*}
Y(i) &= \Sr_i=\left\{(x,i)\in\Vr_i\ :\ 2^{i+1}\ell\leqslant x\leqslant
        2^{i+1}\ell+J_{i+1}-1\text{ for some }\ell\right\} \\
Y(j) &= \left\{(x,j)\in\Vr_j\ :\ 2^{j}\ell\leqslant x\leqslant 2^{j}\ell+J_{j+1}-1\text{ for some }\ell\right\} &&\text{for }j\in\{i+1,\dotsc,r\}.
  \end{align*}
  Furthermore, let $Z$ be the set of vertices on level $i$ that can be forced by vertices in
  $Y(i+1)$ when $Y$ is the set of colored vertices,
  that is, $Z = \left\{(x,i)\ :\ \{(x,i)\}=N(v)\setminus Y\text{ for some
      }v=(y,i+1)\in Y\right\}$. Then
  \begin{equation}\label{eq:target}
    Y(i)\cup Z=\left\{(x,r-k)\in\Vr_{i}\ :\ 2^{i}\ell\leqslant x\leqslant 2^{i}\ell+J_{i+1}-1\text{ for some }\ell\right\}.
  \end{equation}
\end{lemma}
\begin{proof}
  Let $(x,i)$ be an arbitrary element of the right hand side of~(\ref{eq:target}), say
  $2^i\ell\leq x\leq 2^{i}\ell+J_{i+1}-1$. We need to show that $(x,i)\in Y(i)\cup Z$. If $\ell$ is
  even, then $2^{i+1}(\ell/2)\leqslant x\leqslant 2^{i+1}(\ell/2)+J_{i+1}-1$ and
  $(x,i)\in \Sr_i=Y(i)$. Otherwise, $\ell=2\ell'+1$ for some integer $\ell'$ and
  $2^{i+1}\ell'+2^i\leqslant x\leqslant 2^{i+1}\ell'+2^i+J_{i+1}-1$. For $y=x-2^i$, the vertex
  $(y,i+1)$ is in $Y$ because
\[2^{i+1}\ell'\leqslant y\leqslant 2^{i+1}\ell'+J_{i+1}-1\leqslant 2^{i+1}\ell'+J_{i+2}-1.\]
We want to show that $(y,i+1)$ forces $(x,i)$. Let $\ell''=\lfloor \ell'/2\rfloor$, so that $\ell'=2\ell''+\varepsilon$ with $\varepsilon\in\{0,1\}$. The neighborhood of $(y,i+1)$ is
\[N((y,i+1))=
\begin{cases}
\{(y,i),(x,i)\} & \text{if }i=r-1,\\
\{(y,i),(x,i),(y,i+2),(y+(-1)^\varepsilon2^{i+1},i+2)\} & \text{if }i\leq r-2. 
\end{cases}
\]
Now $(y,i)\in Y$, and for $i=r-1$ that's all we need. Using~\ref{eq:Jacobsthal_bound}, we have
\[2^{i+2}\ell''=2^{i+1}(\ell'-\varepsilon)\leqslant 2^{i+1}\ell'\leqslant y
\leqslant 2^{i+1}\ell'+J_{i+1}-1=2^{i+2}\ell''+2^{i+1}\varepsilon+J_{i+1}-1\leqslant
2^{i+2}\ell''+J_{i+3}-1,\]
and therefore $(y,i+2)\in Y$. Similarly, 
\[2^{i+2}\ell''=2^{i+1}\ell'+(-1)^\varepsilon2^{i+1}\leqslant y+(-1)^\varepsilon2^{i+1}\leqslant
2^{i+2}\ell''+2^{i+1}+J_{i+1}-1\leqslant 2^{i+2}\ell''+J_{i+3}-1,\]
and therefore $(y+(-1)^\varepsilon2^{i-1},i+2)\in Y$. Consequently,
$\{(x,i)\}=N((y,i+1))\setminus Y$, and this implies $(x,i)\in Z$. So, we have verified
\[Y(i)\cup Z \supseteq \left\{(x,i)\in\Vr_i\ :\ 2^{i}\ell\leqslant x\leqslant 2^{i}\ell+J_{i+1}-1\text{ for some
    }\ell\right\}.\]
To prove the other inclusion, consider any vertex $(x,i)\in\Vr_i$ which is not contained in the right hand
side of~(\ref{eq:target}), that is, $2^i\ell+J_{i+1}\leqslant x\leqslant 2^i(\ell+1)-1$ for some
$\ell=2\ell'+\varepsilon$. In particular, $(x,i)\not\in Y(i)$, and it remains to be checked that 
$(x,i)\not\in Z$, that is, $(x,i)$ cannot be forced. We have
\[N((x,i))\cap\Vr_{i+1}=\{(x,i+1),(x+(-1)^\varepsilon 2^i,i+1)\}.\]
If $(x,i+1)\in Y$ then
\begin{align*}
  \ell\equiv 0\text{ or }1\pmod 4 &\implies (x+2^{i+1},i+2)\in N((x,i+1))\setminus Y,\\
  \ell\equiv 2\text{ or }3\pmod 4 &\implies (x,i+2)\in N((x,i+1))\setminus Y
\end{align*}
If $(x+(-1)^\varepsilon 2^i,i+1)\in Y$ then $\ell$ is odd and
\begin{align*}
  \ell\equiv 1\pmod 4 &\implies (x+2^{i+1},i+2)\in N((x,i+1))\setminus Y,\\
  \ell\equiv 3\pmod 4 &\implies (x,i+2)\in N((x,i+1))\setminus Y.
\end{align*}
In all cases it follows that $(x,i)\not\in Z$, and this concludes the proof.
\end{proof}
\begin{lemma}\label{lem:prop_down}
For $k\in\{0,1,\dotsc,r\}$, $X_k=X_k(0)\cup X_k(1)\cup\cdots\cup X_k(r)$ with
\begin{align*}
X_k(i) &= \Sr_i &&\text{for }i\in\{0,1,\dotsc,r-k\}\\
X_k(i) &= \left\{(x,i)\in\Vr_i\ :\ 2^{i}\ell\leqslant x\leqslant 2^{i}\ell+J_{i+1}-1\text{ for some }\ell\right\} &&\text{for }i\in\{r-k+1,\dotsc,r\}.
\end{align*}
In particular, $X_r(0)=\Vr_0$.
\end{lemma}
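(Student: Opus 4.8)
The plan is to argue by induction on $k$, the content being entirely a local analysis of one application of the forcing rule~\eqref{eq:prop_down}. For $k=0$ the assertion is just the definition of $\Sr$, since the first index range becomes all of $\{0,\dots,r-1\}\cup\{r\}$ and the second range is empty. For the inductive step fix $k\ge 1$ and write $j=r-k$. The rule~\eqref{eq:prop_down} adjoins to $X_{k-1}$ only vertices on level $j$, so $X_k(i)=X_{k-1}(i)$ for all $i\ne j$ and everything reduces to computing $X_k(j)$. By the induction hypothesis level $j$ still carries $\Sr_j=\{(x,j):\ x\bmod 2^{j+1}<J_{j+1}\}$ (blocks of length $J_{j+1}$ with period $2^{j+1}$), level $j+1$ carries the period-$2^{j+1}$ pattern with blocks of length $J_{j+2}$, and level $j+2$ the period-$2^{j+2}$ pattern with blocks of length $J_{j+3}$. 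What I must show is that the forcing fills in precisely the shifted blocks $\{(x,j):\ 2^{j}\le x\bmod 2^{j+1}<2^{j}+J_{j+1}\}$, so that level $j$ acquires period $2^{j}$ and $X_k(j)=\{(x,j):\ x\bmod 2^{j}<J_{j+1}\}$, the set claimed by the lemma.

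A colored vertex $(y,j+1)$ has level-$j$ neighbours $y,\,y\oplus 2^{j}$ and, when $j+2\le r$, level-$(j+2)$ neighbours $y,\,y\oplus 2^{j+1}$; by the rule it forces a level-$j$ vertex exactly when both level-$(j+2)$ neighbours are already colored and exactly one level-$j$ neighbour is uncolored. I would therefore classify the colored vertices of level $j+1$ by the residue $s=y\bmod 2^{j+1}$ (these are the vertices with $s<J_{j+2}$) and read off their two level-$j$ neighbours against $\Sr_j$. The whole computation is driven by the Jacobsthal identity~\eqref{eq:Jacobsthal_bound} in the two forms $J_{j+2}=2^{j}+J_{j}$ (equivalently $2^{j}=J_{j+1}+J_{j}$) and $J_{j+3}=2^{j+1}+J_{j+1}$. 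The first says that the colored residues on level $j+1$ run over $[0,J_{j+2})=[0,2^{j}+J_{j})$, and a short case split then gives the desired behaviour: a vertex with $s<J_{j+1}$ has $y$ colored and $y\oplus 2^{j}$ uncolored, so it forces the target residue $s+2^{j}$; a vertex with $J_{j+1}\le s<2^{j}$ has both level-$j$ neighbours uncolored and forces nothing; and a vertex with $2^{j}\le s<J_{j+2}$ has $y$ uncolored and $y\oplus 2^{j}$ colored and forces its own residue $s$, which by $J_{j+2}<2^{j}+J_{j+1}$ again lies in the target window. The second identity is used to check that the level-$(j+2)$ neighbours of the forcing vertices with $s<J_{j+1}$ are colored, so these forces are not blocked from above.

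Combining the three cases proves both inclusions at once. Every shifted-block position is hit, since the residue $2^{j}+t$ with $0\le t<J_{j+1}$ is forced by the vertex with $s=t$; and no position outside the shifted blocks is ever forced, since each forced residue computed above lies in $[2^{j},2^{j}+J_{j+1})$. The non-overlap inequality $J_{j+1}<2^{j}$, immediate from $2^{j}=J_{j+1}+J_{j}$, keeps the old and new blocks disjoint, so level $j$ genuinely passes from period $2^{j+1}$ to period $2^{j}$. This is exactly the claimed $X_k(j)$, and the induction advances.

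I expect the real work to be the exactness half of the case analysis together with the two boundary configurations in which the upper level involved is the aperiodic set $\Sr_r=\{(x,r):\ 0\le x\le J_{r+1}-1\}$ rather than a periodic pattern: $k=1$, where level $j+1=r$ supplies the forces, and $k=2$, where level $j+2=r$ controls the blocking condition. Near $x=J_{r+1}-1$ the clean residue bookkeeping fails, and I would treat these two steps by hand, using the single instance $J_{r+1}=2^{r-1}+J_{r-1}$ of~\eqref{eq:Jacobsthal_bound} to verify that the vertices of $\Sr_r$ reach exactly the $2^{r-1}$ positions needed to force the full shifted block on level $r-1$, and that precisely the forcing vertices with $y<2^{r}$ pass the blocking test when $k=2$. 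The remaining degenerate point is $j=0$ (the last down-step, $k=r$), where $J_{1}=2^{0}$ forces the two block inequalities to hold with equality; here the shifted block is the single odd residue and the generic argument specializes to filling in all odd $x$ on level $0$. Once these cases are settled the generic step runs verbatim for $3\le k\le r-1$, completing the proof.
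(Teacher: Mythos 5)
Your proposal is correct and takes essentially the same route as the paper: induction on $k$, a local analysis of a single application of the forcing rule at level $j=r-k$ driven by the identity $J_{n+2}=2^n+J_n$, including the same verification that the level-$(j+2)$ neighbours of the forcing vertices are already coloured. The only organizational difference is in the ``nothing outside the shifted block is forced'' direction, where you classify the coloured vertices of level $j+1$ by residue and observe that every possible force lands inside the target window, whereas the paper fixes a vertex outside the window and shows each of its coloured upper neighbours is blocked by an uncoloured level-$(j+2)$ neighbour; both arguments are sound.
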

\begin{proof}
We proceed by induction on $k$. For $k=0$, there is nothing to do since
$X_0=\Sr=\Sr_0\cup\cdots\cup\Sr_r$. Let $k\geqslant 1$ and set $i=r-k$. By~(\ref{eq:prop_down}), the sets
$X_k$ and $X_{k-1}$ differ only on level $i$, that is, $X_k(j)=X_{k-1}(j)$ for all $j\neq i$. By induction, this implies
\begin{align*}
X_k(j) &= \Sr_j &&\text{for }j\in\{0,1,\dotsc,i-1\}\\
X_k(j) &= \left\{(x,j)\in\Vr_j\ :\ 2^{j}\ell\leqslant x\leqslant 2^{j}\ell+J_{j+1}-1\text{ for some }\ell\right\} &&\text{for }j\in\{i+1,\dotsc,r\}.
\end{align*}
Also by induction,
\begin{align*}
X_{k-1}(j) &= \Sr_j &&\text{for }j=i\\
X_{k-1}(j) &= \left\{(x,j)\in\Vr_j\ :\ 2^{j}\ell\leqslant x\leqslant 2^{j}\ell+J_{j+1}-1\text{ for some }\ell\right\} &&\text{for }y\in\{i+1,\dotsc,r\},
\end{align*}  
and we can apply Lemma~\ref{lem:prop_down_aux} with $Y=X_{k-1}(i)\cup X_{k-1}(i+1)\cup\dots\cup
X_{k-1}(r)$ to obtain
\begin{multline*}
  X_k(i) \stackrel{\eqref{eq:prop_down}}{=} X_{k-1}(i)\cup\left\{(x,i)\in\Vr_{i}\ :\ \{(x,i)\}=N(v)\setminus X_{k-1}\text{ for some
    }v=(y,i+1)\in X_{k-1}\right\}\\
  =\left\{(x,i)\in\Vr_i\ :\ 2^{i}\ell\leqslant x\leqslant 2^{i}\ell+J_{i+1}-1\text{ for some }\ell\right\}.\qedhere
\end{multline*}
\end{proof}
\begin{lemma}\label{lem:prop_up_aux}
  Fix $i\in\{1,2,\dots,r\}$, and let $Y=Y(0)\cup Y(1)\cup\dots\cup Y(i)$ be defined by
  \begin{align*}
    Y(j) &= \Vr_j &&\text{for }i\in\{0,1,\dots,i-1\},\\
    Y(i) &= \left\{(x,i)\in\Vr_i\,:\,2^i\ell\leq x\leq 2^i\ell+ J_{i+1}-1\text{ for some }\ell\right\}.
  \end{align*}
  Furthermore, let $Z$ be the set of vertices on level $i$ that can be forced by vertices in
  $Y(i-1)=\Vr_{i-1}$ when $Y$ is the set of colored vertices, that is,
  $Z = \left\{(x,i)\ :\ \{(x,i)\}=N(v)\setminus Y\text{ for some }v=(y,i-1)\in Y\right\}$. Then
  $Y(i)\cup Z=\Vr_i$.
\end{lemma}
\begin{proof}
  Let $(x,i)$ be an arbitrary element of $\Vr_i$. We need to show that $(x,i)\in Y(i)\cup Z$. If
  $2^{i}\ell\leqslant x\leqslant 2^{i}\ell+J_{i+1}-1$ for some $\ell$, then $(x,i)\in
  Y(i)$. Otherwise $2^{i}\ell+J_{i+1}\leqslant x\leqslant 2^{i}(\ell+1)-1$ for some $\ell$. Setting
  $y=x -2^{i-1}$, and observing that $0\leq J_{i+1}-2^{i-1}\leq y<x\leq 2^r-1$, we obtain
  $(y,i-1)\in \Vr_{i-1}=Y(i-1)$. The only neighbors of $(y,i-1)$ that are potentially not in $Y$ are
  $(x,i)$ and $(y,i)$. Using~\eqref{eq:Jacobsthal_bound}, we obtain
  \[2^i\ell\leq 2^i\ell+J_{i-1}\stackrel{\eqref{eq:Jacobsthal_bound}}{=}2^i\ell+J_{i+1}-2^{i-1}\leq\
    y\ \leq
    2^i\ell+2^{i-1}-1\stackrel{\eqref{eq:Jacobsthal_bound}}{=}2^i\ell+J_{i+1}-J_{i-1}-1\leq
    2^i\ell+J_{i+1}-1,\]
  and therefore, $(y,i)\in Y(i)$. As a consequence $\{(x,i)\}=N((y,i-1))\setminus Y$, which implies
  $(x,i)\in Z$.  
\end{proof}
\begin{lemma}\label{lem:prop_up}
  For $k\in\{0,1,\dotsc,r\}$, $X_{r+k}=X_{r+k}(0)\cup X_{r+k}(1)\cup\cdots\cup X_{r+k}(r)$ with
\[X_{r+k}(i) =
\begin{cases}
\Vr_i &\text{for }i\in\{0,\dotsc,k\},\\  
X_r(i) &\text{for }i\in\{k+1,k+2,\dotsc,r\}.
\end{cases}\]
\end{lemma}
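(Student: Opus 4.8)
The plan is to argue by induction on $k$, mirroring the proof of Lemma~\ref{lem:prop_down} but now propagating upward. The base case $k=0$ is the content of Lemma~\ref{lem:prop_down} at $k=r$; here one uses that the level-$0$ description, being periodic with period $2^0=1$ and width $J_1=1$, coincides with all of $\Vr_0$, so that $X_r(0)=\Vr_0$ while $X_r(i)=X_r(i)$ holds trivially for $i\geqslant 1$. For the inductive step I would first observe that, by the defining relation~\eqref{eq:prop_up}, the set $X_{r+k}$ differs from $X_{r+k-1}$ only in level-$k$ vertices. Hence $X_{r+k}(i)=X_{r+k-1}(i)$ for every $i\neq k$, and the inductive hypothesis immediately gives $X_{r+k}(i)=\Vr_i$ for $i\leqslant k-1$ and $X_{r+k}(i)=X_r(i)$ for $i\geqslant k+1$. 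The whole content of the step is therefore the single claim $X_{r+k}(k)=\Vr_k$: one upward step from the (by hypothesis completely coloured) level $k-1$ colours all of level $k$.

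To establish this, I would describe the colouring of level $k$ at the start of the step. By the inductive hypothesis $X_{r+k-1}(k)=X_r(k)$, and by Lemma~\ref{lem:prop_down} this set consists, inside every block of $2^k$ consecutive indices $[2^k\ell,\,2^k(\ell+1)-1]$, of the first $J_{k+1}$ indices (for $k=r$ there is a single block, matching $\Sr_r$). The key arithmetic input is the Jacobsthal identity $J_{k+1}=2^{k-1}+J_{k-1}$, which is~\eqref{eq:Jacobsthal_bound} with $n=k-1$. It shows that the coloured prefix covers the whole lower half $[0,2^{k-1}-1]$ of each block together with the first $J_{k-1}$ indices of the upper half; consequently every \emph{uncoloured} level-$k$ vertex lies in the upper half and its sibling—the vertex obtained by flipping bit $k$, i.e.\ subtracting $2^{k-1}$—lies in the fully coloured lower half.

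It then remains to exhibit, for each uncoloured $(x,k)$, a coloured level-$(k-1)$ vertex that forces it. A vertex $(y,k-1)$ has exactly the two up-neighbours $(y,k)$ and $(y\oplus 2^{k-1},k)$, which form one sibling pair; choosing $y\in\{x,\,x\oplus 2^{k-1}\}$ so that this pair equals $\{x,\,x\oplus 2^{k-1}\}$, one neighbour is the coloured sibling and the other is the uncoloured target $(x,k)$. The vertex $(y,k-1)$ is itself coloured, as level $k-1$ is full. For $k\geqslant 2$ its two down-neighbours lie in level $k-2$, which is completely coloured by the inductive hypothesis, and for $k=1$ there are no down-neighbours at all; in either case $(x,k)$ is the unique uncoloured neighbour of $(y,k-1)$, so it is coloured in step $r+k$ by~\eqref{eq:prop_up}. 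Since every uncoloured level-$k$ vertex is forced, $X_{r+k}(k)=\Vr_k$, completing the induction.

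I expect the main obstacle to be the bookkeeping that makes the forcing happen in a single step rather than through a chain: one must verify that no sibling pair is \emph{simultaneously} uncoloured, for otherwise neither member could be forced from level $k-1$ within one application of the colouring rule. This is precisely what $J_{k+1}=2^{k-1}+J_{k-1}$ guarantees, by pinning the entire lower half of every block as already coloured; reconciling the block arithmetic with the two boundary cases $k=1$ (no down-neighbours) and $k=r$ (a single, non-periodic block) under one uniform description is the only delicate point.
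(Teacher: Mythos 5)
Your proposal is correct and follows essentially the same route as the paper: induction on $k$, reduction to the single claim $X_{r+k}(k)=\Vr_k$, and forcing each uncoloured $(x,k)$ from a level-$(k-1)$ vertex whose other up-neighbour (the sibling $x\oplus 2^{k-1}$) is already coloured and whose level-$(k-2)$ neighbours are covered by the inductive hypothesis. Your explicit isolation of the identity $J_{k+1}=2^{k-1}+J_{k-1}$ as the reason no sibling pair is simultaneously uncoloured is exactly the arithmetic the paper's (rather typo-laden) proof is implicitly using, so no further comparison is needed.
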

\begin{proof}
  We proceed by induction on $k$. For $k=0$, there is nothing to do since $X_r(0)=\Vr_0$ by Lemma~
  \ref{lem:prop_down}. For $k\geqslant 1$, the sets $X_{r+k}$ and $X_{r+k-1}$ differ only on level
  $k$ (see~\eqref{eq:prop_up}), and by induction, 
  \begin{align*}
  X_{r+k}(i) &=X_{r+k-1}(i)= \Vr_i &&\text{for }i\in\{0,1,\dotsc,k-1\}\\
  X_{r+k}(i) &=X_{r+k-1}(i)= X_r(i) &&\text{for }i\in\{k+1,\dotsc,r-1\}.
  \end{align*} 
We can apply Lemma~\ref{lem:prop_up_aux} with $i=k$ and $Y=X_{r+k-1}(0)\cup X_{r+k-1}(1)\cup\dots\cup
X_{r+k-1}(k)$ to obtain
\begin{multline*}
  X_{r+k}(k)\stackrel{\eqref{eq:prop_up}}{=}X_{r+k-1}(k)\cup\left\{(x,k)\in\Vr_{k}\ :\ \{(x,k)\}=N(v)\setminus X_{r+k-1}\text{ for some
    }v=(y,k-1)\in X_{r+k-1}\right\}\\
  =\Vr_k.\qedhere
\end{multline*}
\end{proof}
Combining Lemmas~\ref{lem:prop_down} and~\ref{lem:prop_up}, we have proved that $\Sr$ is indeed a
zero forcing  set for $\BF(r)$.
\begin{lemma}\label{lem:propagating}
For every $r\geqslant 1$, $\Sr$ is a zero forcing  set for the butterfly network $\BF(r)$. \qed
\end{lemma}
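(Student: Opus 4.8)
The plan is to read the conclusion off the two propagation lemmas, once I have checked that the sequence $X_0 = \Sr, X_1, \dots, X_{2r}$ really records a legitimate run of the color-change rule. First I would justify the remark preceding the statement, namely that applying the coloring rule with $X_{k-1}$ colored produces all the vertices of $X_k$. By the defining equations~\eqref{eq:prop_down} and~\eqref{eq:prop_up}, every vertex of $X_k \setminus X_{k-1}$ is, by construction, the unique uncolored neighbor (with respect to $X_{k-1}$) of some vertex already in $X_{k-1}$, so each such vertex is eligible to be forced from $X_{k-1}$.

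The one point I would be careful about is that these forces do not interfere with one another. Performing any single force only enlarges the colored set, and enlarging the colored set can never destroy the uniqueness condition that licenses another force; hence the forces may be carried out in any order and all of $X_k$ ends up colored. Iterating this observation from $\Sr = X_0$ through to $X_{2r}$, the color-change rule started from $\Sr$ colors every vertex of $X_{2r}$.

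It then only remains to identify $X_{2r}$ with the whole vertex set, and this is exactly the case $k = r$ of Lemma~\ref{lem:prop_up}, which gives $X_{2r}(i) = \Vr_i$ for every $i \in \{0, 1, \dots, r\}$ and therefore $X_{2r} = \Vr_0 \cup \dots \cup \Vr_r = \Vr$. Since the rule started from $\Sr$ colors all of $\Vr$, the set $\Sr$ is a zero forcing set for $\BF(r)$, as claimed. The genuine work lives in Lemmas~\ref{lem:prop_down} and~\ref{lem:prop_up}, so for this combining step there is no real obstacle; the only thing to watch is that the two phases chain together correctly, i.e.\ that the set $X_r$ produced at the end of Lemma~\ref{lem:prop_down} coincides with the base case $X_r$ fed into Lemma~\ref{lem:prop_up}, which indeed it does.
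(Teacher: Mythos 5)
Your proposal is correct and takes essentially the same route as the paper, which states the lemma with no further proof beyond the observation that it follows by combining Lemmas~\ref{lem:prop_down} and~\ref{lem:prop_up}. Your additional checks --- that each vertex of $X_k\setminus X_{k-1}$ is legitimately forced from $X_{k-1}$, that the forces are order-independent, and that the two phases chain at $X_r$ --- are all sound and simply make explicit what the paper leaves implicit.
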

From Lemmas~\ref{lem:cardinality} and~\ref{lem:propagating} we obtain an upper
bound for the zero forcing number of the butterfly network.
\begin{proposition}\label{prop:lower_bound}
For every $r\geqslant 1$, $\displaystyle Z\left(\BF(r)\right)\leqslant\frac19\left[(3r+7)2^r+2(-1)^r\right]$. \qed
\end{proposition}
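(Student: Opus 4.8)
The plan is to read the bound off directly from the two preceding lemmas, since all of the substantive work has already been carried out. By definition, the zero forcing number $Z(\BF(r))$ is the minimum cardinality of any zero forcing set for $\BF(r)$; hence exhibiting a single zero forcing set of the claimed size suffices to bound $Z(\BF(r))$ from above by that size. The entire proof is therefore an assembly step, and I would keep it to a couple of lines.

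First I would invoke Lemma~\ref{lem:propagating}, which guarantees that the explicitly constructed set $\Sr$ is a zero forcing set for $\BF(r)$ for every $r \geqslant 1$. Since $Z(\BF(r))$ is a minimum over all zero forcing sets, this immediately yields the inequality $Z(\BF(r)) \leqslant \lvert \Sr \rvert$. Then I would substitute the closed-form cardinality supplied by Lemma~\ref{lem:cardinality}, namely $\lvert \Sr \rvert = \frac19\left[(3r+7)2^r + 2(-1)^r\right]$, to obtain exactly the stated bound.

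The main obstacle has in fact already been surmounted in the earlier results: verifying that $\Sr$ forces the entire vertex set via the two-phase propagation analysis (downward in Lemma~\ref{lem:prop_down}, upward in Lemma~\ref{lem:prop_up}), and solving the Jacobsthal recurrence to pass from the summation $J_{r+1}+\sum_{i=1}^r 2^{r-i}J_i$ to the closed form in Lemma~\ref{lem:cardinality}. Given those two inputs, the proposition follows with no remaining difficulty, so I would present it simply as the combination $Z(\BF(r)) \leqslant \lvert \Sr \rvert = \frac19\left[(3r+7)2^r + 2(-1)^r\right]$.
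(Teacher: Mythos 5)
Your proposal is correct and matches the paper exactly: the proposition is stated with an immediate \qed, derived by combining Lemma~\ref{lem:propagating} (that $\Sr$ is a zero forcing set) with Lemma~\ref{lem:cardinality} (its cardinality). Nothing further is needed.
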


\section{The lower bound for $Z(\BF(r))$}\label{sec:lower_bound}
By~\eqref{eq:Zfnullity}, the corank of the adjacency matrix of a graph $G$ provides a lower bound
for the zero forcing number of $G$, and consequently we can conclude the proof of
Theorem~\ref{thm:main_result} by establishing the following result.
\begin{proposition}\label{prop:upper_bound}
Let $F$ be a field, and let $A_r$ denote the adjacency matrix of $\BF(r)$ over $F$. Then
\[\rank(A_r)\leqslant(r+1)2^r-\frac19\left[(3r+7)2^r+2(-1)^r\right]=\frac29\left[(3r+1)2^{r}-(-1)^r\right].\]
\end{proposition}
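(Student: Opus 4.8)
The plan is to prove the stated inequality in the equivalent form $\rank(A_r)\leqslant(r+1)2^r-\lvert\Sr\rvert$ by showing that the $(r+1)2^r-\lvert\Sr\rvert$ rows of $A_r$ indexed by the vertices \emph{outside} the zero forcing set $\Sr$ already span the full row space of $A_r$; any spanning set of that size forces $\rank(A_r)\leqslant(r+1)2^r-\lvert\Sr\rvert$, and by Lemma~\ref{lem:cardinality} this is exactly $\frac29\left[(3r+1)2^r-(-1)^r\right]$. Since $\Sr$ is a zero forcing set by Lemma~\ref{lem:propagating}, the very argument behind~\eqref{eq:Zfnullity} shows these same rows are linearly independent, so they are in fact a basis and the bound is met with equality over every field at once.

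To make the spanning tractable I would dualize. Writing $\{e_v\}_{v\in\Vr}$ for the standard basis and using that $A_r$ is symmetric, the row indexed by $v$ equals $A_re_v$, so it suffices to produce, for each $v\in\Sr$, a vector $z^{(v)}\in\ker A_r$ whose coordinates on $\Sr$ are $1$ at $v$ and $0$ elsewhere on $\Sr$. These $z^{(v)}$ have pairwise distinct restrictions to $\Sr$, hence are linearly independent, giving $\dim\ker A_r\geqslant\lvert\Sr\rvert$, i.e. $\rank(A_r)\leqslant(r+1)2^r-\lvert\Sr\rvert$. Equivalently, they witness that the restriction map $\ker A_r\to F^{\Sr}$ is onto; as this map is injective by the forcing argument, it is an isomorphism and $\corank(A_r)=\lvert\Sr\rvert$ exactly.

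To build $z^{(v)}$ I would reuse the propagation order already established for Lemmas~\ref{lem:prop_down} and~\ref{lem:prop_up}. Fix the prescribed values on $\Sr$ and extend along $X_0\subseteq X_1\subseteq\cdots\subseteq X_{2r}$: whenever a colored vertex $u$ forces its unique uncolored neighbour $w$, the equation $(A_rz)_u=\sum_{w'\in N(u)}z_{w'}=0$ uniquely determines $z_w=-\sum_{w'\in N(u)\setminus\{w\}}z_{w'}$ from values already assigned. This yields a well-defined $z$ satisfying the linear equation indexed by every \emph{forcing} vertex. The place where the butterfly geometry must really enter, and the step I expect to be the main obstacle, is verifying that the equations indexed by the vertices that never perform a force hold automatically, so that $z$ is a genuine element of $\ker A_r$. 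I would check this by tracking the assigned values level by level via the explicit descriptions of $X_k(i)$ in Lemmas~\ref{lem:prop_down} and~\ref{lem:prop_up}, using the local ``diamond'' relations of $\BF(r)$: two level-$i$ vertices $(x,i)$ and $(x+\vect e_i,i)$ share the lower neighbours $(x,i-1),(x+\vect e_i,i-1)$, so $A_r\!\left(e_{(x,i)}-e_{(x+\vect e_i,i)}\right)$ is supported on level $i+1$ alone; at the extreme levels $r$ and $0$ this degenerates to the row equalities $\mathrm{row}_{(x,r)}=\mathrm{row}_{(x+\vect e_r,r)}$ and $\mathrm{row}_{(x,0)}=\mathrm{row}_{(x+\vect e_1,0)}$, which already supply the first batch of kernel vectors.

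The bookkeeping then has to close: the number of independent kernel vectors produced must equal $\lvert\Sr\rvert=\frac19\left[(3r+7)2^r+2(-1)^r\right]$ from Lemma~\ref{lem:cardinality}, which is precisely the corank needed for the claimed rank. As an independent sanity check valid whenever $\operatorname{char}F\neq2$, the blocks $I+P_i$ attached to the $r$ edge-directions commute (the $P_i$ are commuting bit-flip involutions), so conjugating each level by the Hadamard matrix block-diagonalizes $A_r$ into $2^r$ tridiagonal matrices of order $r+1$, one per mode $\vect s\in\{0,1\}^r$, with off-diagonal entries $1+(-1)^{s_i}$; the corank of $A_r$ is then the total number of odd-length segments obtained by cutting the path $0\text{--}1\text{--}\cdots\text{--}r$ at the indices $i$ with $s_i=1$, summed over all $\vect s$, and evaluating that sum reproduces $\lvert\Sr\rvert$. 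The genuinely field-independent route, however, is the forcing-based construction above, and that is what I would write up in full.
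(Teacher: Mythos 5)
Your reduction of the proposition to a dual statement about kernel vectors is sound: exhibiting, for each $v\in\Sr$, a vector $z^{(v)}\in\ker A_r$ whose restriction to $\Sr$ is the indicator of $v$ would indeed show that the rows indexed by $\overline S^{(r)}$ span the row space, and hence give the claimed bound via Lemma~\ref{lem:cardinality}. The problem is that the construction of these kernel vectors is exactly the mathematical content of the proposition, and you have not carried it out. The greedy procedure you describe --- assign values on $\Sr$, then solve for $z_w$ from the equation of the vertex $u$ that forces $w$ --- produces a vector satisfying only the equations indexed by vertices that perform a force; in general such a vector is \emph{not} in $\ker A_r$, which is precisely why the inequality $\mr^F(G)\geq n-Z(G)$ in~\eqref{eq:Zfnullity} can be strict for other graphs. (There is also a well-definedness issue: a vertex may be forceable by several colored neighbours, and the assigned value could a priori depend on the choice.) You correctly flag the verification of the remaining equations as ``the main obstacle,'' but flagging it does not discharge it. The paper's proof spends all of Section~\ref{sec:lower_bound} on this point: it works with the equivalent primal formulation (each row indexed by $\Sr$ is a $\{0,\pm1\}$-combination of rows indexed by $\overline S^{(r)}$), sets up a recursive vertex numbering compatible with the block structure of $A_r$, verifies the base cases $r=1,2$ by hand, and proves four induction lemmas --- two passing from $\BF(r-1)$ to $\BF(r)$ for the rows inherited from $S^{(r-1)}$, and two passing from $\BF(r-2)$ to $\BF(r)$ for the rows $r2^r+1,\dotsc,r2^r+J_{r+1}$, where the translated combinations must be corrected by additional level-$0$ rows. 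None of this bookkeeping is present in your proposal, so as written it is a proof strategy rather than a proof.

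Your ``sanity check'' is actually the more promising original contribution, and I would encourage you to promote it rather than the forcing-based construction. Over a field with $\operatorname{char}F\neq 2$ the matrix $H^{\otimes r}$ is invertible, it simultaneously diagonalizes the commuting bit-flip involutions, and conjugating each level block-diagonalizes $A_r$ into $2^r$ tridiagonal matrices of order $r+1$ with off-diagonal entries $1+(-1)^{s_i}\in\{0,2\}$; the corank of each block is the number of odd-order path segments obtained by deleting the edges $i$ with $s_i=1$ from the path on $\{0,\dotsc,r\}$. This checks out for $r=1,2$ (giving coranks $2$ and $6$), and evaluating the resulting sum over $\vect s$ would give a short, conceptual proof of the proposition --- but only for $\operatorname{char}F\neq 2$, so the characteristic-$2$ case would still require an argument along the lines of the paper's explicit integral linear dependencies (which, having all coefficients in $\{0,\pm1\}$, work over every field at once). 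As it stands, neither route in your proposal is completed.
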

We will prove this by verifying that the rows corresponding to vertices in $\Sr$ are linear
combinations of the rows corresponding to vertices in the complement of $\Sr$.  For this purpose, it
turns out to be convenient to number the vertices recursively as indicated in
Figure~\ref{fig:butterfly_3}. Formally, this vertex numbering is given by a bijection
$f:\{0,1,2,\dotsc\}^2\to\{1,2,3,\dotsc\}$ defined as follows. For a positive integer $x$, let
$g(x)=\lfloor\log_2(x)+1\rfloor$, i.e., $g(x)$ is the unique integer such that
$2^{g(x)-1}\leqslant x<2^{g(x)}$. In addition, let $g(0)=-1$. Then,
\begin{equation}\label{eq:vertex_numbers}
f(x,i)=
\begin{cases}
  i2^i+x+1 & \text{if }i\geqslant g(x),\\
  g(x)2^{g(x)-1}+f\left(x-2^{g(x)-1},\,i\right) & \text{if }i<g(x).
\end{cases}
\end{equation}
Note that the first argument of $f$ on the right hand side is smaller than on the left hand side,
and this implies that the function $f$ is indeed well-defined by~\eqref{eq:vertex_numbers}. For
example, using $g(6)=3$ and $g(2)=2$,
\begin{align*}
  f(6,3) &= 3\times 8+6+1 =31,\\
  f(6,2) &= 3\times 4+f(2,2)=12+2\times 4+2+1=23,\\
  f(6,1) &= 3\times 4+f(2,1)= 12+2\times 2+f(0,1)=16+1\times 2+0+1=19.
\end{align*}

With respect to the vertex numbering given by~\eqref{eq:vertex_numbers}, the adjacency matrices for
$\BF(1)$ and $\BF(2)$ are \setcounter{MaxMatrixCols}{32}
\begin{align*}
A_1 &=
  \begin{pmatrix}
    0&0&1&1\\
    0&0&1&1\\
    1&1&0&0\\
    1&1&0&0
  \end{pmatrix},
&
A_2 &= \left(
  \begin{array}{cccc|cccc|cccc}
    0&0&1&1&0&0&0&0&0&0&0&0\\
    0&0&1&1&0&0&0&0&0&0&0&0\\ \cline{9-12} &&&&&&&&&&& \\[-2.2ex]
    1&1&0&0&0&0&0&0&1&0&1&0\\
    1&1&0&0&0&0&0&0&0&1&0&1\\ \hline &&&&&&&&&&& \\[-2.2ex]
    0&0&0&0&0&0&1&1&0&0&0&0\\
    0&0&0&0&0&0&1&1&0&0&0&0\\ \cline{9-12} &&&&&&&&&&& \\[-2.2ex]
    0&0&0&0&1&1&0&0&1&0&1&0\\
    0&0&0&0&1&1&0&0&0&1&0&1\\\hline &&\multicolumn{1}{|c}{}&&&&\multicolumn{1}{|c}{}&&&&& \\[-2.2ex]
    0&0&\multicolumn{1}{|c}{1}&0&0&0&\multicolumn{1}{|c}{1}&0&0&0&0&0\\
    0&0&\multicolumn{1}{|c}{0}&1&0&0&\multicolumn{1}{|c}{0}&1&0&0&0&0\\
    0&0&\multicolumn{1}{|c}{1}&0&0&0&\multicolumn{1}{|c}{1}&0&0&0&0&0\\
    0&0&\multicolumn{1}{|c}{0}&1&0&0&\multicolumn{1}{|c}{0}&1&0&0&0&0
  \end{array}\right),  
\end{align*}
and in general, $A_r$ has the structure illustrated in Figure~\ref{matrix_structure} where $I$ is the identity matrix of size $2^{r-1}\times 2^{r-1}$. 
\begin{figure}[htb]
  \centering
\end{figure}

\begin{figure}[htb]
  \begin{minipage}[t]{.48\linewidth}
    \includegraphics[width=\textwidth]{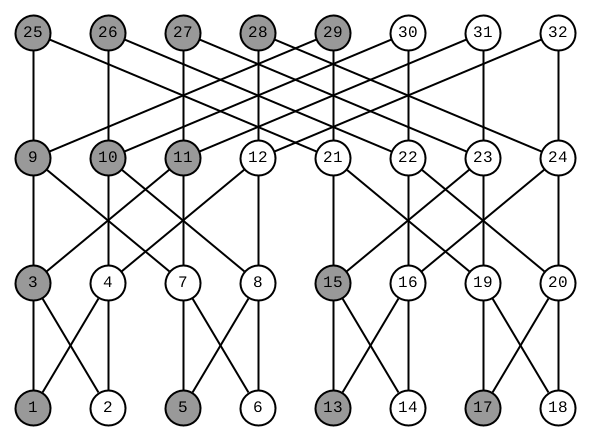}
  \caption{The butterfly network $\BF(3)$. The zero forcing set $S^{(3)}$ is indicated by darker vertices.}\label{fig:butterfly_3}
  \end{minipage}
  \begin{minipage}[t]{.5\linewidth}
\centering
\begin{tikzpicture}[scale=.6]
  \draw (0,0) -- (10,0) -- (10,10) --(0,10) -- (0,0);
  \draw (0,6) -- (4,6) -- (4,10);
  \draw (8,10) -- (8,7) -- (10,7);
  \draw (10,6) -- (8,6) -- (8,3) -- (10,3);
  \draw (4,0) -- (4,2) -- (7,2) -- (7,0);
  \draw (0,2) -- (3,2) -- (3,0);
  \draw (8,0) -- (8,2) -- (10,2);
  \draw (8,0) -- (8,10);
  \draw (0,2) -- (10,2);
  \draw (4,6) -- (8,6) -- (8,2) -- (4,2) -- (4,6);
  \node at (2,8) {{\Large $A_{r-1}$}};
  \node at (6,4) {{\Large $A_{r-1}$}};
  \node at (6,8) {{\Large $\mathbf{0}$}};
  \node at (2,4) {{\Large $\mathbf{0}$}};
  \node at (9,8.5) {{\Large $\mathbf{0}$}};
  \node at (1.5,1) {{\Large $\mathbf{0}$}};
  \node at (5.5,1) {{\Large $\mathbf{0}$}};
  \node at (9,4.5) {{\Large $\mathbf{0}$}};
  \node at (9,1) {{\Large $\mathbf{0}$}};
  \node at (3.5,1.5) {{\Large $I$}};
  \node at (3.5,.5) {{\Large $I$}};
  \node at (7.5,1.5) {{\Large $I$}};
  \node at (7.5,.5) {{\Large $I$}};
  \node at (8.5,2.5) {{\Large $I$}};
  \node at (9.5,2.5) {{\Large $I$}};
  \node at (8.5,6.5) {{\Large $I$}};
  \node at (9.5,6.5) {{\Large $I$}};
\end{tikzpicture}  
    \caption{The structure of the adjacency matrix of the butterfly network $\BF(r)$.}\label{matrix_structure}    
  \end{minipage}
\end{figure}
Using the vertex numbering given by~(\ref{eq:vertex_numbers}), the upper bound construction for a
zero forcing set $\Sr\in V(\BF(r))$ can be written recursively as $S^{(1)} = \{1,3\}$ and
\begin{equation}\label{eq:recursion}
S^{(r)} = S^{(r-1)}\cup\left\{i+r2^{r-1}\ :\ i\in S^{(r-1)},\,i\leqslant
  (r-1)2^{r-1}\right\}\cup\left\{r2^r+1,\dotsc,r2^r+J_{r+1}\right\}  
\end{equation}
for $r\geqslant 2$. In order to prove Proposition~\ref{prop:upper_bound} it is
sufficient to show that every row $i\in\Sr$ of $A_r$ can be written as a linear combination of the
rows in $\overline S^{(r)}=\{1,\dotsc,(r+1)2^r\}\setminus\Sr$. We proceed by induction on $r$. Let
$A_r(i)$ denote the $i$-th row of $A_r$. The induction base is provided by checking the cases $r=1$
and $r=2$. For $S^{(1)}=\{1,3\}$, we have
\begin{align}
  A_1(1) &= A_1(2), \label{eq:LB_base_1}\\ 
  A_1(3) &= A_1(4),\label{eq:LB_base_2}
\end{align}
and for $S^{(2)}=\{1,5,3,9,10,11\}$ (listed level by level) we have
\begin{align}
  A_2(1) &= A_2(2), \label{eq:LB_base_3}\\
  A_2(5) &= A_2(6), \label{eq:LB_base_4}\\
  A_2(3) &= A_2(4)+A_2(7)-A_2(8), \label{eq:LB_base_5}\\
  A_2(9) &= A_2(2) + A_2(6) - A_2(12), \label{eq:LB_base_6}\\
  A_2(10) &= A_2(12), \label{eq:LB_base_7}\\
  A_2(11) &= A_2(2)+A_2(6)-A_2(12). \label{eq:LB_base_8}
\end{align}
The next two lemmas follow directly from the recursive structure illustrated in Figure~\ref{matrix_structure}.
\begin{lemma}\label{lem:induction_1}
  If $i\leqslant (r-1)2^{r-1}$ and 
\[A_{r-1}(i)=\sum_{j\in K^+}A_{r-1}(j)-\sum_{j\in K^-}A_{r-1}(j)\text{ for some }
K^+,K^-\subseteq\overline S^{(r-1)},\] 
then
  \begin{align*}
A_{r}(i) &= \sum_{j\in K^+}A_{r}(j)-\sum_{j\in K^-}A_{r}(j)\quad\text{and}\\
A_{r}\left(i+r2^{r-1}\right) &= \sum_{j\in K'^+}A_{r}(j)-\sum_{j\in K'^-}A_{r}(j)   
  \end{align*}
where $K^+,K^-\subseteq\overline S^{(r)}$ and $K'^{\varepsilon}=\{j+r2^{r-1}\ :\ j\in
K^\varepsilon\}\subseteq\overline S^{(r)}$ for $\varepsilon\in\{+,-\}$. \qed
\end{lemma}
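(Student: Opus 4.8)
The plan is to read everything off the recursive block decomposition of $A_r$ in Figure~\ref{matrix_structure}. Split the index set $\{1,\dots,(r+1)2^r\}$ into three consecutive blocks: the \emph{first copy} $C_1=\{1,\dots,r2^{r-1}\}$, the \emph{second copy} $C_2=\{r2^{r-1}+1,\dots,r2^r\}$, and the \emph{new level} $C_0=\{r2^r+1,\dots,(r+1)2^r\}$. By Figure~\ref{matrix_structure} the principal submatrices of $A_r$ on $C_1$ and on $C_2$ are each equal to $A_{r-1}$, the $(C_1,C_2)$ block is zero, and the only nonzero entries outside these copies are the identity blocks linking $C_0$ to the last level (level $r-1$) of each copy. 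In the numbering~\eqref{eq:vertex_numbers} the last level of a copy occupies its top $2^{r-1}$ indices, so the hypothesis $i\leqslant(r-1)2^{r-1}$ says exactly that $i$ is an \emph{interior} vertex of the first copy, i.e.\ a vertex of level at most $r-2$ that is not adjacent to $C_0$. Consequently the $i$-th row of $A_r$ is the $i$-th row of $A_{r-1}$ padded with zeros on $C_2\cup C_0$.

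First I would establish $A_r(i)=\sum_{j\in K^+}A_r(j)-\sum_{j\in K^-}A_r(j)$ by comparing the two sides on each of the three column blocks. Every index in $K^+\cup K^-$ lies in $\overline S^{(r-1)}\subseteq C_1$, so all rows involved are rows of the first copy. On the columns $C_1$ the identity is literally the hypothesis $A_{r-1}(i)=\sum_{j\in K^+}A_{r-1}(j)-\sum_{j\in K^-}A_{r-1}(j)$. On the columns $C_2$ both sides vanish, since the $(C_1,C_2)$ block of $A_r$ is zero and hence every row of the first copy is zero there. The containment $K^+,K^-\subseteq\overline S^{(r)}$ is immediate from the recursion~\eqref{eq:recursion}: the only piece of $S^{(r)}$ meeting $C_1$ is the copy of $S^{(r-1)}$, so $\overline S^{(r-1)}=\overline S^{(r)}\cap C_1\subseteq\overline S^{(r)}$.

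The one genuinely nontrivial block is $C_0$. There the left-hand side is zero (since $i$ is interior), while the right-hand side equals $\sum_{j\in K^+}b_j-\sum_{j\in K^-}b_j$, where $b_j$ denotes the restriction of row $j$ to $C_0$. By the block structure $b_j$ is nonzero only when $j$ is a last-level vertex of the first copy, and for such $j$ the vectors $b_j$ are exactly the rows of $[\,I\mid I\,]$ (reflecting the level-$(r-1)$-to-level-$r$ adjacency $x\mapsto\{x,x+\vect e_r\}$) and hence linearly independent. Thus the new-level block vanishes if and only if every last-level index has net coefficient zero, i.e.\ the relation does not actually involve any last-level row of $\BF(r-1)$. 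I expect this to be the main obstacle, and I would clear it by recording it as part of the inductive invariant: the relations produced for interior rows only ever use interior rows (levels $\leqslant r-2$), exactly as in the base relations~\eqref{eq:LB_base_1} and \eqref{eq:LB_base_3}--\eqref{eq:LB_base_5}, where the summands for an interior $i$ are again interior. With $K^+,K^-$ consisting of interior rows, every $b_j=0$ and the $C_0$ block is trivially zero.

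Finally the second conclusion follows by the symmetry between the two copies. The translation $j\mapsto j+r2^{r-1}$ maps $C_1$ bijectively onto $C_2$ and carries the first copy of $A_{r-1}$ onto the second, so $A_r(i+r2^{r-1})$ is the row $A_{r-1}(i)$ padded with zeros on $C_1\cup C_0$, and the same three-block comparison yields $A_r(i+r2^{r-1})=\sum_{j\in K'^+}A_r(j)-\sum_{j\in K'^-}A_r(j)$. It remains only to check $K'^{\varepsilon}\subseteq\overline S^{(r)}$: since $j\in\overline S^{(r-1)}$ means $j\notin S^{(r-1)}$, the recursion~\eqref{eq:recursion} places $j+r2^{r-1}$ outside the shifted copy of $S^{(r-1)}$, and as $r2^{r-1}<j+r2^{r-1}\leqslant r2^r$ it lies in neither of the remaining two pieces of $S^{(r)}$.
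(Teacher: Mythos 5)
Your argument is correct, and it is doing real work that the paper does not: the paper states this lemma with a \qed{} and the single remark that it ``follows directly from the recursive structure illustrated in Figure~\ref{matrix_structure}''; there is no written proof. Your three-block comparison on $C_1$, $C_2$, $C_0$ is exactly the intended reading of that figure, and your check that $K^{\varepsilon},K'^{\varepsilon}\subseteq\overline S^{(r)}$ via~\eqref{eq:recursion} supplies what the paper leaves implicit. More importantly, the obstacle you isolate on the new-level columns is genuine and is not dispatched by the block structure alone: a row $j$ of the first copy with $(r-1)2^{r-1}<j\leqslant r2^{r-1}$ (a level-$(r-1)$ vertex) has nonzero restriction to $C_0$, those restrictions are the linearly independent rows of $\bigl[\,I\mid I\,\bigr]$, and the stated hypothesis $K^{\pm}\subseteq\overline S^{(r-1)}$ does \emph{not} exclude such $j$ --- for instance $\overline S^{(2)}=\{2,4,6,7,8,12\}$ contains the top-level index $12$. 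So the conclusion $A_r(i)=\sum_{j\in K^+}A_r(j)-\sum_{j\in K^-}A_r(j)$ really does require the extra information that $K^+\triangle K^-$ meets no last-level index, and your remedy --- carrying ``relations for rows $i\leqslant(r-1)2^{r-1}$ use only rows $j\leqslant(r-1)2^{r-1}$'' as part of the inductive invariant --- is the right one. It holds in the base relations \eqref{eq:LB_base_1} and \eqref{eq:LB_base_3}--\eqref{eq:LB_base_5}, and it propagates because the only relations ever fed into this lemma are the outputs of this lemma and of Lemma~\ref{lem:induction_2}, both of which land inside $\{1,\dotsc,r2^r\}$; the relations from Lemmas~\ref{lem:induction_3} and~\ref{lem:induction_4} concern last-level rows $r2^r+i$ and are consumed only by Lemma~\ref{lem:induction_2}, which cancels the $C_0$ block by the pairing $j\leftrightarrow j+r2^{r-1}$ and so does not need the invariant. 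The one thing I would ask you to add is a sentence making this bookkeeping explicit, since as literally stated the lemma is not self-contained.
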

\begin{lemma}\label{lem:induction_2}
If $(r-1)2^{r-1}+1\leqslant i\leqslant (r-1)2^{r-1}+J_r$ and 
\[A_{r-1}(i)=\sum_{j\in K^+}A_{r-1}(j)-\sum_{j\in K^-}A_{r-1}(j)\text{ for some }
K^+,K^-\subseteq\overline S^{(r-1)},\] 
then
\[A_{r}(i)=\sum_{j\in K'^+}A_{r}(j)-\sum_{j\in K'^-}A_{r}(j)\]
where 
\begin{align*}
  K'^+ &= K^+\cup\left\{j+r2^{r-1}\ :\ j\in
    K^-\right\}\cup\left\{i+r2^{r-1}\right\}\subseteq\overline S^{(r)},\\
  K'^- &= K^-\cup\left\{j+r2^{r-1}\ :\ j\in K^+\right\}\subseteq\overline S^{(r)}.
\end{align*}\qed
\end{lemma}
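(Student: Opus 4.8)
The plan is to verify the claimed identity $A_r(i)=\sum_{j\in K'^+}A_r(j)-\sum_{j\in K'^-}A_r(j)$ directly, one column block at a time, using the recursive structure of Figure~\ref{matrix_structure}. That structure partitions the columns of $A_r$ into three groups: the \emph{first copy} (columns $1,\dotsc,r2^{r-1}$), the \emph{second copy} (columns $r2^{r-1}+1,\dotsc,r2^r$), and the level-$r$ vertices (columns $r2^r+1,\dotsc,(r+1)2^r$). Two structural facts drive the argument. First, every first-copy row $A_r(j)$ (with $j\leqslant r2^{r-1}$) vanishes on the second-copy columns and restricts to $A_{r-1}(j)$ on the first-copy columns, with the symmetric statement for second-copy rows. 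Second, a bottom-level vertex of the first copy with binary label $x<2^{r-1}$ and its mirror in the second copy are joined to the \emph{same} pair of level-$r$ vertices, namely those labelled $x$ and $x+2^{r-1}$; hence $A_r(j)$ and $A_r(j+r2^{r-1})$ agree on every level-$r$ column for each bottom-level $j$, while interior $j$ contribute nothing on the level-$r$ columns at all.

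First I would split the right-hand side into three pieces matching the definition of $K'^\pm$: the original first-copy terms $\sum_{j\in K^+}A_r(j)-\sum_{j\in K^-}A_r(j)$, the sign-reversed mirror terms $\sum_{j\in K^-}A_r(j+r2^{r-1})-\sum_{j\in K^+}A_r(j+r2^{r-1})$, and the single extra term $A_r(i+r2^{r-1})$. On the first-copy columns only the first piece survives, and by the first structural fact it restricts to $\sum_{j\in K^+}A_{r-1}(j)-\sum_{j\in K^-}A_{r-1}(j)$, which by hypothesis is $A_{r-1}(i)$, matching $A_r(i)$ there. On the second-copy columns the first piece vanishes; the mirror piece contributes $-A_{r-1}(i)$ and the extra term contributes $+A_{r-1}(i)$ (both read off after shifting column indices by $r2^{r-1}$), so they cancel, matching the zero block of $A_r(i)$.

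The crux is the level-$r$ columns, and this is the step I expect to be the main obstacle, since it is the only place where the otherwise disjoint copies interact and where the prescribed sign pattern of $K'^\pm$ is forced. Here the second structural fact gives $A_r(j)=A_r(j+r2^{r-1})$ on every level-$r$ column for each $j$ (both sides zero for interior $j$, the same pair of ones for bottom-level $j$). Because the mirror terms carry exactly the opposite signs of the original first-copy terms, the first two pieces cancel completely on the level-$r$ columns, leaving only $A_r(i+r2^{r-1})$; by the same fact this equals $A_r(i)$ on those columns, completing the column-by-column match.

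It remains to confirm $K'^+,K'^-\subseteq\overline S^{(r)}$, a bookkeeping check against the recursion~\eqref{eq:recursion}. The indices in $K^\pm$ lie in $\overline S^{(r-1)}$, and since $S^{(r)}$ agrees with $S^{(r-1)}$ on the first copy they remain free in $\BF(r)$. Each mirror $j+r2^{r-1}$ with $j\in\overline S^{(r-1)}$ is free because the second-copy part of $S^{(r)}$ consists only of mirrors of elements of $S^{(r-1)}$. Finally $i+r2^{r-1}$ is free: although $i$ itself belongs to $S^{(r-1)}$ as one of the first $J_r$ bottom-level vertices, it satisfies $i>(r-1)2^{r-1}$, so by~\eqref{eq:recursion} its mirror is excluded from $S^{(r)}$. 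This is precisely why the hypothesis restricts $i$ to the bottom level and why the extra index $i+r2^{r-1}$ may legitimately be placed in $K'^+$.
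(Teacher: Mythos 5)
Your proof is correct and is precisely the argument the paper has in mind: the paper states this lemma without a written proof, appealing only to the block structure of $A_r$ in Figure~\ref{matrix_structure}, and your column-by-column verification (first copy, second copy, level-$r$ columns, using that a bottom-level vertex and its mirror share the same pair of level-$r$ neighbours) together with the bookkeeping for $K'^\pm\subseteq\overline S^{(r)}$ via~\eqref{eq:recursion} fills in exactly those details.
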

To illustrate the step from $A_{r-1}(i)$ to $A_r(i)$ in Lemma~\ref{lem:induction_2}, consider $r=3$
and $i=10$. From~\eqref{eq:LB_base_7}, we have $K^+=\{12\}$ and $K^-=\emptyset$, and by
Lemma~\ref{lem:induction_2} (note that $r2^{r-1}=12$), $K'^+=\{12,\,22\}$ and $K'^-=\{24\}$, hence
$A_3(10)=A_3(12)+A_3(22)-A_3(24)$. This can be seen in terms of the recursive structure indicated in
Figure~\ref{matrix_structure} as follows. We have $A_2(10)=A_2(12)=\vect e_4+\vect e_8$, and then,
\begin{align*}
  A_3(10) &= \vect e_4+\vect e_8+\vect e_{26} +\vect e_{30},&
                                                              A_3(12) &= \vect e_4+\vect e_8+\vect e_{28} +\vect e_{32},\\
  A_3(22) &= \vect e_{16}+\vect e_{20}+\vect e_{26} +\vect e_{30},&
                                                                    A_3(24) &= \vect e_{16}+\vect e_{20}+\vect e_{28} +\vect e_{32}.
\end{align*}

Lemmas~\ref{lem:induction_1} and~\ref{lem:induction_2} take care of the first two components in the
recursion for $\Sr$ in~\eqref{eq:recursion}. It remains to check the rows $r2^r+i$ for
$i\in\left\{1,\dotsc,J_{r+1}\right\}$.  For $J_{r-1}+1\leqslant i\leqslant 2^{r-1}$ the required
linear dependence is $A_r\left(r2^r+i\right)=A_r\left(r2^r+i+2^{r-1}\right)$, because
$i+2^{r-1}>J_{r+1}$ and therefore $r2^r+i+2^{r-1}\in\overline S^{(r)}$. For $i>2^{r-1}$ we have
$i\leqslant 2^{r-1}+J_{r-1}$ and $A_r(r2^r+i)=A_r(r2^r+i-2^{r-1})$, and consequently it is
sufficient to consider $i\in\left\{1,\dotsc,J_{r-1}\right\}$. The induction step for these cases
will be from $\BF(r-2)$ to $\BF(r)$, so we have to take the recursion for the adjacency matrix one
step further which is illustrated in Figure~\ref{fig:recursion_2}.
\begin{figure}[htb]
  \centering
  \begin{tikzpicture}[scale=.6]
    \draw[thick] (0,0) rectangle (20,20);
    \draw[thick] (3,20) -- (3,17) -- (0,17);
    \draw[thick] (3,17) rectangle (6,14);
    \draw[thick] (8,20) -- (8,12) -- (0,12);
    \draw[thick] (8,12) rectangle (16,4);
    \draw[thick] (11,12) -- (11,9) -- (8,9);
    \draw[thick] (11,9) rectangle (14,6);
    \draw[thick] (0,4) -- (20,4);
    \draw[thick] (16,0) -- (16,20);
    \draw[thick] (0,14) -- (8,14);
    \draw[thick] (6,12) -- (6,20);
    \draw[thick] (6,18) rectangle (8,17);
    \draw[thick] (2,14) rectangle (3,12);
    \draw[thick] (6,15) -- (8,15);
    \draw[thick] (5,14) -- (5,12);
    \draw[thick] (8,6) -- (16,6);
    \draw[thick] (14,4) -- (14,12);
    \draw[thick] (14,10) rectangle (16,9);
    \draw[thick] (14,7) -- (16,7);
    \draw[thick] (13,4) -- (13,6);
    \draw[thick] (10,4) rectangle (11,6);
    \draw[thick] (16,6) -- (20,6);
    \draw[thick] (14,0) -- (14,4);
    \draw[thick] (6,4) rectangle (8,0);
    \draw[thick] (16,14) rectangle (20,12);
    \draw[->] (-1,18.5) to node[sloped,below] {$(r-1)2^{r-2}$} (-1,15.5);
    \draw[->] (-.7,18.5) to node[sloped,below] {$(3r-1)2^{r-2}$} (-.7,7.5);
    \draw[->] (-.4,17.5) to node[sloped,below,near end] {$(3r+4)2^{r-2}$} (-.4,1);
    \node at (6.5,17.5) {$I$};
    \node at (7.5,17.5) {$I$};
    \node at (6.5,14.5) {$I$};
    \node at (7.5,14.5) {$I$};
    \node at (2.5,13.5) {$I$};
    \node at (2.5,12.5) {$I$};
    \node at (5.5,13.5) {$I$};
    \node at (5.5,12.5) {$I$};
    \node at (10.5,4.5) {$I$};
    \node at (10.5,5.5) {$I$};
    \node at (13.5,4.5) {$I$};
    \node at (13.5,5.5) {$I$};
    \node at (14.5,6.5) {$I$};
    \node at (15.5,6.5) {$I$};
    \node at (14.5,9.5) {$I$};
    \node at (15.5,9.5) {$I$};
    \node at (17,13) {{\Large $I$}};
    \node at (19,13) {{\Large $I$}};
    \node at (17,5) {{\Large $I$}};
    \node at (19,5) {{\Large $I$}};
    \node at (7,3) {{\Large $I$}};
    \node at (7,1) {{\Large $I$}};
    \node at (15,3) {{\Large $I$}};
    \node at (15,1) {{\Large $I$}};
    \node at (4.5,18.5) {{\Large $0$}};
    \node at (1.5,15.5) {{\Large $0$}};
    \node at (4.5,15.5) {{\Large $A_{r-2}$}};
    \node at (1.5,18.5) {{\Large $A_{r-2}$}};
    \node at (12.5,10.5) {{\Large $0$}};
    \node at (9.5,7.5) {{\Large $0$}};
    \node at (12.5,7.5) {{\Large $A_{r-2}$}};
    \node at (9.5,10.5) {{\Large $A_{r-2}$}};
    \node at (3,2) {{\Large $0$}};
    \node at (11,2) {{\Large $0$}};
    \node at (18,2) {{\Large $0$}};
    \node at (18,9) {{\Large $0$}};
    \node at (18,17) {{\Large $0$}};
    \node at (12,16) {{\Large $0$}};
    \node at (4,8) {{\Large $0$}};
    \node at (22,18) {$(r-2)2^{r-2}$};
    \node at (22,17) {$(r-1)2^{r-2}$};
    \node at (22,15) {$(2r-3)2^{r-2}$};
    \node at (22,14) {$(r-1)2^{r-1}$};
    \node at (22,12) {$r2^{r-1}$};
    \node at (22,10) {$(3r-2)2^{r-2}$};
    \node at (22,9) {$(3r-1)2^{r-2}$};
    \node at (22,7) {$(4r-3)2^{r-2}$};
    \node at (22,6) {$(2r-1)2^{r-1}$};
    \node at (22,4) {$r2^r$};
    \node at (22,2) {$(2r+1)2^{r-1}$};
    \node at (22,0) {$(r+1)2^r$}; 
  \end{tikzpicture}
  \caption{The second level of the recursion for $A_r$.}
  \label{fig:recursion_2}
\end{figure}
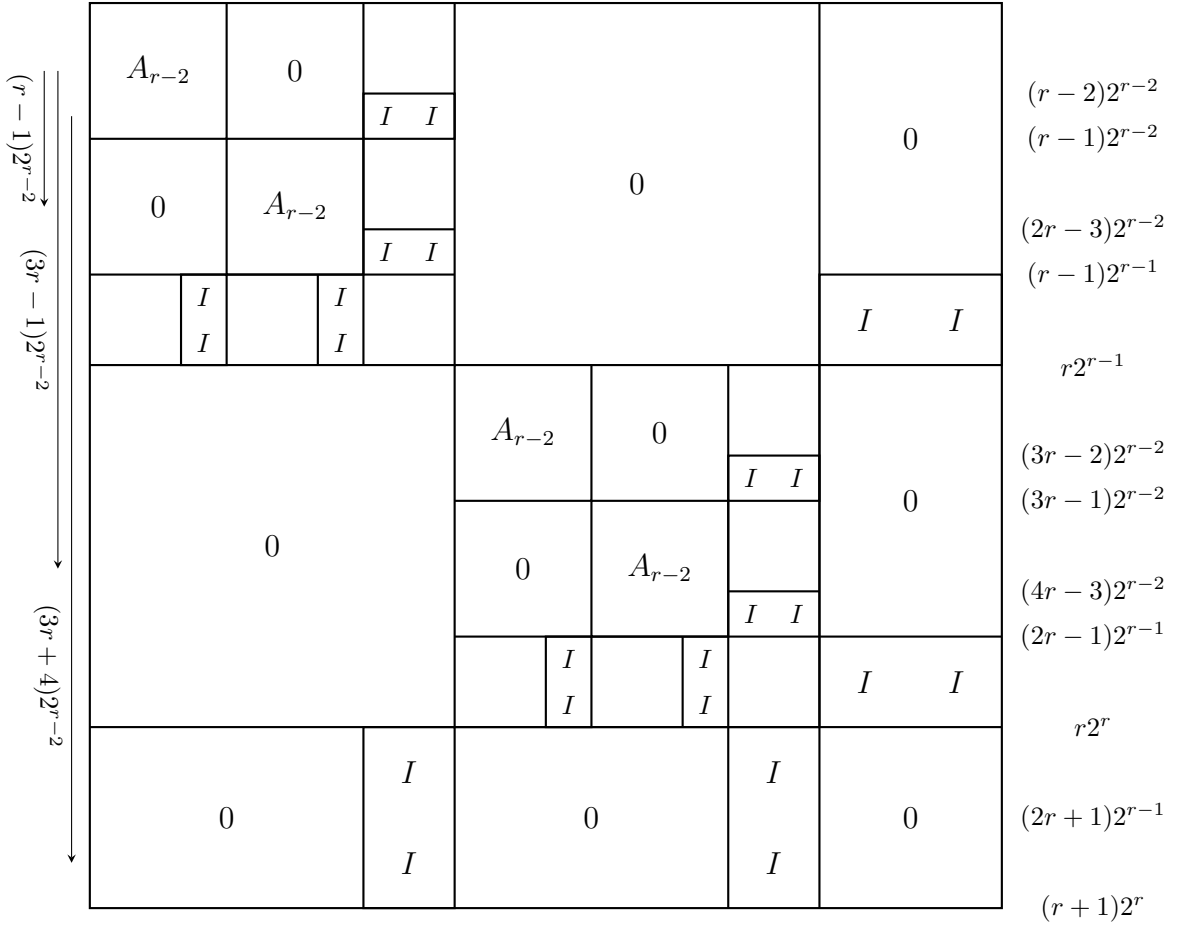
The basic idea is as follows. Let $i\in\{1,\dotsc,J_{r-1}\}$. Then $(r-2)2^{r-2}+i\in S^{(r-2)}$,
and by induction there are sets $K^+,\,K^-\subseteq\overline S^{(r-2)}$ such that
\begin{equation}\label{eq:hypo}
A_{r-2}\left((r-2)2^{r-2}+i\right)=\sum_{j\in K^+}A_{r-2}(j)-\sum_{j\in K^-}A_{r-2}(j),  
  \end{equation}
or equivalently
\begin{equation}\label{eq:hypo_a}
\sum_{j\in K^+}A_{r-2}(j)-\sum_{j\in K'^-}A_{r-2}(j)=\vect 0,  
\end{equation}
where $K'^-=K^-\cup\left\{(r-2)2^{r-2}+i\right\}$. This is a linear dependence of the rows of
$A_{r-2}$ with coefficients in $\{1,-1\}$ and involving exactly one of the rows
$(r-2)2^{r-2}+1,\dotsc,(r-2)2^{r-2}+J_{r-1}$, namely $(r-2)2^{r-2}+i$. Putting $K=K^+\cup K'^-$ we
have
\begin{equation}\label{eq:large_enough}
  K\cap\left\{(r-2)2^{r-2}+1,\dotsc,(r-2)2^{r-2}+J_{r-1}\right\}=\left\{(r-2)2^{r-2}+i\right\}.
\end{equation}
We now translate the $\lvert K\rvert$ rows in this linear dependence by $(r-1)2^{r-2}$ and
$(3r-1)2^{r-2}$ as indicated in Figure~\ref{fig:recursion_2}. The combination of the $2\lvert
K\rvert$ translated rows is a $\{0,1,-1\}$-vector $\vect x$ which has all its nonzero entries in
columns with indices in $\{(r-1)2^{r-1}+1,\dotsc,r2^{r-1}\}\cup\{(2r-1)2^{r-1}+1,\dotsc,r2^r\}$, and
has $x_k=1$ for $k\in\left\{(r-1)2^{r-1}+i,\,(2r-1)2^{r-1}+i\right\}$ which are the one-entries of
the row $A_r(r2^r+i)$. Finally, we use some of the rows $r2^r+J_{r+1}+1,\dotsc,(r+1)2^r$ with the
appropriate sign to eliminate the other nonzero entries of $\vect x$.

More precisely, we define
$\tilde K=\tilde K^+\cup\tilde K^-\subseteq\{1,\dotsc,(r+1)2^r\}$ with $\tilde K^+=\tilde K^+_1\cup
\tilde K^+_2$ and $\tilde K^-=\tilde K^-_1\cup\tilde K^-_2$  where
\begin{align}
\tilde K^+_1&= \left\{j+(r-1)2^{r-2}\ :\ j\in K'^-\right\}\cup\left\{j+(3r-1)2^{r-2}\ :\ j\in K'^-\right\}\label{eq:K1plus}\\
\tilde K^-_1&= \left\{j+(r-1)2^{r-2}\ :\ j\in K^+\right\}\cup\left\{j+(3r-1)2^{r-2}\ :\ j\in
  K^+\right\}\label{eq:K1minus}\\
\nonumber \tilde K^+_2&=\left\{j+(3r+4)2^{r-2}\ :\ j\in K^+\text{ with } j>(r-2)2^{r-2}\right\}\\
&\qquad\cup\left\{j+(3r+5)2^{r-2}\ :\ j\in K^+\text{ with } j>(r-2)2^{r-2}\right\}.\label{eq:K2plus}\\  
\nonumber \tilde K^-_2&=\left\{j+(3r+4)2^{r-2}\ :\ j\in K^-\text{ with } j>(r-2)2^{r-2}\right\}\\
&\qquad\cup\left\{j+(3r+5)2^{r-2}\ :\ j\in K'^-\text{ with } j>(r-2)2^{r-2}\right\}.\label{eq:K2minus}  
\end{align}
The construction of $\tilde K$ is illustrated for $r=4$ and $i=1$ in Figure~\ref{fig:induction}.
\begin{figure}[htb]
  \centering
  \includegraphics[width=\textwidth]{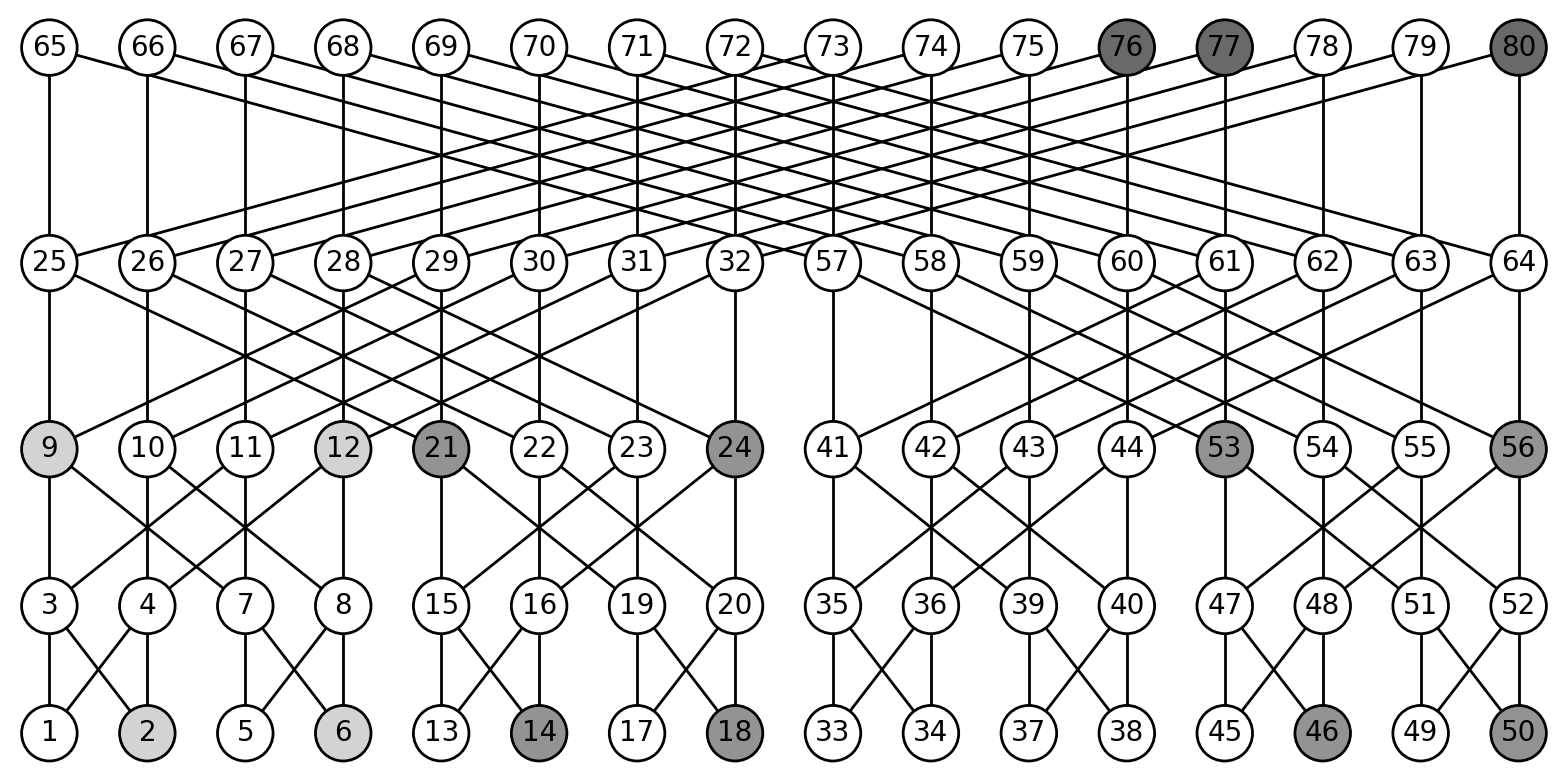}
  \caption{The construction of $\tilde K$ for $r=4$ and $i=1$. Here $K^+=\{2,6\}$, $K'^-=\{9,12\}$,
    $\tilde K^-_1=\{14,18,46,50\}$, $\tilde K^+_1=\{21,24,53,56\}$, $\tilde K^-_2=\{76,77,80\}$,
    $\tilde K^+_2=\emptyset$.}
  \label{fig:induction}
\end{figure}
Here, we want to construct a linear combination representing $A_4(65)$, and we start
with~\eqref{eq:LB_base_6} in the form $A_2(2)+A_2(6)-A_2(9)-A_2(12)=\vect 0$ with $K^+=\{2,6\}$ and $K'^-=\{9,12\}$. Shifting by $12$
and $44$, we obtain $\tilde K^-=\{14,18,46,50\}$ and $\tilde K^+=\{21,24,53,56\}$, and looking
at Figure~\ref{fig:recursion_2}, we can use $A_2(2)+A_2(6)-A_2(9)-A_2(12)=\vect 0$ to verify that
\begin{multline*}
  A_4(21)+A_4(24)+A_4(53)+A_4(56)-A_4(14)-A_4(18)-A_4(46)-A_4(50)\\
  =\vect e_{25} +\vect e_{28}+\vect e_{29}+\vect e_{32}+\vect e_{57} +\vect e_{60}+\vect e_{61}+\vect e_{64},
\end{multline*}
where the right hand side corresponds to the (level 3)-neighborhood of $\{21,24,53,56\}$ which is the
set of indices coming from shifting $9$ and $12$. The contributions of rows from shifting $2$ and
$6$ cancel, because these rows do not see the identity matrices in columns 25 to 32 (see
Figure~\ref{matrix_structure}), which reflects the fact that vertices 2 and 6 don't have neighbors
on level $3$. Now we note that the neighborhood of $\{76,77,80\}$ is $\{28,29,32,60,61,64\}$, and
conclude
\begin{multline*}
  A_4(21)+A_4(24)+A_4(53)+A_4(56)-A_4(14)-A_4(18)-A_4(46)-A_4(50)-A_4(76)-A_4(77)-A_4(80)\\
  =\vect e_{25} +\vect e_{57} =A_4(65),
\end{multline*}
as required.

\medskip

Lemmas~\ref{lem:induction_3} and~\ref{lem:induction_4} state that $\tilde K$ has the required properties.
\begin{lemma}\label{lem:induction_3}
Let $r\geqslant 3$, $i\in\left\{1,\dotsc,J_{r-1}\right\}$, suppose $K\subseteq\overline S^{(r-2)}$
satisfies~\eqref{eq:hypo}, and define $\tilde K$ by~\eqref{eq:K1plus} to~\eqref{eq:K2minus}. Then
$\tilde K\subseteq\overline S^{(r)}$. 
\end{lemma}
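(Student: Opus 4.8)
The plan is to unfold the recursion~\eqref{eq:recursion} for $S^{(r)}$ two levels deep, so that the resulting description of $S^{(r)}$ is aligned with the block decomposition of $A_r$ in Figure~\ref{fig:recursion_2}, and then to verify the four families making up $\tilde K$ one translation offset at a time. Substituting the recursion for $S^{(r-1)}$ into the one for $S^{(r)}$ and simplifying with $r2^{r-1}=2r\cdot 2^{r-2}$ and $(r-1)2^{r-2}+r2^{r-1}=(3r-1)2^{r-2}$, I would record that $S^{(r)}$ is the disjoint union of $S^{(r-2)}$, the Block-2 part $\{i+(r-1)2^{r-2}:i\in S^{(r-2)},\ i\le(r-2)2^{r-2}\}$, a translated full copy $\{i+r2^{r-1}:i\in S^{(r-2)}\}$, the Block-4 part $\{i+(3r-1)2^{r-2}:i\in S^{(r-2)},\ i\le(r-2)2^{r-2}\}$, a forcing strip in the middle layer, and the bottom-layer forcing set $\{r2^r+1,\dots,r2^r+J_{r+1}\}$. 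The one subtle point is the bottom layer: by~\eqref{eq:Jacobsthal_bound} we have $J_{r+1}=2^{r-1}+J_{r-1}$, so this forcing set overruns the midpoint $(4r+2)2^{r-2}=(2r+1)2^{r-1}$ of the bottom layer by exactly $J_{r-1}$ positions, while $J_{r-1}\le 2^{r-2}$ keeps it clear of the final $2^{r-2}$ rows.

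Two preliminary observations drive every boundary check. First, the intersection of $S^{(r-2)}$ with the top layer $V^{(r-2)}_{r-2}$, i.e.\ with the indices $(r-2)2^{r-2}+1,\dots,(r-1)2^{r-2}$, equals $\{(r-2)2^{r-2}+1,\dots,(r-2)2^{r-2}+J_{r-1}\}$; hence any $j\in\overline S^{(r-2)}$ with $j>(r-2)2^{r-2}$ in fact satisfies $j>(r-2)2^{r-2}+J_{r-1}$. Second, the single index $(r-2)2^{r-2}+i$ that distinguishes $K'^-$ from $K^-$ lies in that top layer with $1\le i\le J_{r-1}$, i.e.\ inside the part of the layer belonging to $S^{(r-2)}$.

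For the two level-1 offsets I would note that inside the image range $\{(r-1)2^{r-2}+1,\dots,(r-1)2^{r-1}\}$ of the offset $(r-1)2^{r-2}$ the only elements of $S^{(r)}$ are the Block-2 translates, and likewise that inside the image range of the offset $(3r-1)2^{r-2}$ the only elements of $S^{(r)}$ are the Block-4 translates. In both cases a translate $j+c$ lies in $S^{(r)}$ only if $j\in S^{(r-2)}$ and $j\le(r-2)2^{r-2}$. Every $j\in K^+\cup K^-\subseteq\overline S^{(r-2)}$ fails the first requirement, and the extra index $(r-2)2^{r-2}+i$ fails the second, so the translates defining $\tilde K^+_1$ and $\tilde K^-_1$ all lie in $\overline S^{(r)}$.

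The level-2 offsets are where the argument is genuinely tight, and this is the step I expect to be the main obstacle. The offset $(3r+4)2^{r-2}$ sends a top-layer index $(r-2)2^{r-2}+m$ to $(4r+2)2^{r-2}+m$ in the second half of the bottom layer, a region that meets $S^{(r)}$ exactly in its first $J_{r-1}$ positions; since~\eqref{eq:K2plus}--\eqref{eq:K2minus} apply this offset only to $j\in K^+\cup K^-$, the first observation gives $m>J_{r-1}$ and pushes the image past the forcing part. The offset $(3r+5)2^{r-2}$ instead sends the index to $(4r+3)2^{r-2}+m$ in the final $2^{r-2}$ rows, which $J_{r-1}\le 2^{r-2}$ keeps disjoint from the forcing set; this is exactly why the construction may safely apply this offset to $K'^-$, including the boundary index $(r-2)2^{r-2}+i$, rather than to $K^-$. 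Checking that this asymmetry between~\eqref{eq:K2plus} and~\eqref{eq:K2minus} is precisely calibrated to the overflow $J_{r-1}$ of the bottom-layer forcing set is the crux of the argument; once it is in place, $\tilde K^+_2$ and $\tilde K^-_2$ lie in $\overline S^{(r)}$, and together with the previous paragraph this yields $\tilde K\subseteq\overline S^{(r)}$.
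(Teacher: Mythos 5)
Your proposal is correct and follows essentially the same route as the paper: both arguments unfold the recursion~\eqref{eq:recursion} to locate the images of the two level-one offsets inside the Block-2 and Block-4 translates (where membership in $S^{(r)}$ would force $j\in S^{(r-2)}$ with $j\leq(r-2)2^{r-2}$, contradicting $j\in K$), and both dispose of $\tilde K_2$ via the identity $J_{r+1}=2^{r-1}+J_{r-1}$ together with $J_{r-1}\leq 2^{r-2}$, which is exactly the calibration of the $(3r+4)$ versus $(3r+5)$ offsets that you identify as the crux. No gaps.
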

\begin{proof}
Note that by construction
\begin{equation}\label{eq:box}
  \tilde K_1=\tilde K^+_1\cup \tilde K^-_1\subseteq\left\{(r-1)2^{r-2}+1,\dots,(r-1)2^{r-1}\right\}\cup\left\{(3r-1)2^{r-2}+1,\dots,(2r-1)2^{r-1}\right\}.
\end{equation}
Suppose there is an element $j\in K$ such that $k=j+(r-1)2^{r-2}\in\tilde K_1\cap\Sr$. From $j\in
K\subseteq V^{(r-2)}$ it follows that $k\leq (r-1)2^{r-2}+(r-1)2^{r-2}=(r-1)2^{r-1}<r2^{r-1}$.
Using~\eqref{eq:recursion},
we obtain 
\begin{align*}
k&\in\Sr=S^{(r-1)}\cup\left\{p+r2^{r-1}\ :\ p\in S^{(r-1)},\,p\leqslant
  (r-1)2^{r-1}\right\}\cup\left\{r2^r+1,\dotsc,r2^r+J_{r+1}\right\}\\
\stackrel{k\leq r2^{r-1}}{\implies} k&\in S^{(r-1)}=S^{(r-2)}\cup\left\{p+(r-1)2^{r-2}\ :\ p\in S^{(r-2)},\,p\leqslant
  (r-2)2^{r-2}\right\}\\
&\qquad\qquad\qquad\qquad\cup\left\{(r-1)2^{r-2}+1,\dotsc,(r-1)2^{r-1}+J_{r}\right\}\\
\implies k&= p+(r-1)2^{r-2}\text{ for some } p\in S^{(r-2)}\text{ with }p\leqslant
  (r-2)2^{r-2},  
\end{align*}
which contradicts the assumption that $j\in K\subseteq\overline S^{(r-2)}\cup\{(r-2)2^{r-2}+i\}$.
Similarly, for $k=j+(3r-1)2^{r-1}\in \tilde K_1\cap\Sr$ we obtain
\begin{align*}
k&\in\Sr=S^{(r-1)}\cup\left\{p+r2^{r-1}\ :\ p\in S^{(r-1)},\,p\leqslant
  (r-1)2^{r-1}\right\}\cup\left\{r2^r+1,\dotsc,r2^r+J_{r+1}\right\}\\
\implies k &=p+r2^{r-1}\text{ for some }p\in S^{(r-1)}\text{ with }p\leqslant
  (r-1)2^{r-1}\\
\implies k&= q+(r-1)2^{r-2}\text{ for some } q\in S^{(r-2)}\text{ with }q\leqslant
  (r-2)2^{r-2},
\end{align*}
where we use $k>(3r-1)2^{r-2}$ for the last implication. Again we obtain a contradiction to the
assumption that $j\in K\subseteq\overline S^{(r-2)}\cup\{(r-2)2^{r-2}+i\}$. Finally, the elements
of $\tilde K_2=\tilde K^+_2\cup\tilde K^-_2$ are in $\overline S^{(r)}$ since for $j\in K^+\cup K^-$ we have
\[j>(r-2)2^{r-2}\implies j>(r-2)2^{r-2}+J_{r-1}\implies
j+(3r+4)2^{r-2}>r2^r+2^{r-1}+J_{r-1}=2^r+J_{r+1},\]
and for $j\in K'$,
\[j>(r-2)2^{r-2}\implies j+(3r+5)2^{r-2}>r2^r+2^{r-1}+2^{r-2}>r2^r+J_{r+1},\]
and this concludes the proof of the lemma.
\end{proof}
\begin{lemma}\label{lem:induction_4}
Let $r\geqslant 3$, $i\in\left\{1,\dotsc,J_{r-1}\right\}$, suppose $K^+,K^-\subseteq\overline S^{(r-2)}$
satisfy~\eqref{eq:hypo}, and define $\tilde K^+$ and $\tilde K^-$ by~\eqref{eq:K1plus} to~\eqref{eq:K2minus}.
Then 
\begin{equation}\label{eq:punchline}
A_{r}\left(r2^r+i\right)=\sum_{j\in \tilde K^+}A_{r}(j)-\sum_{j\in \tilde K^-}A_{r}(j).
\end{equation}
\end{lemma}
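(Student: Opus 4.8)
The plan is to prove~\eqref{eq:punchline} by evaluating its right-hand side column by column, using the two-level block decomposition of $A_r$ in Figure~\ref{fig:recursion_2} together with the inductive relation~\eqref{eq:hypo}. Lemma~\ref{lem:induction_3} already guarantees $\tilde K\subseteq\overline S^{(r)}$, so the combination on the right of~\eqref{eq:punchline} is legitimate; only its \emph{value} has to be identified with the single row $A_r(r2^r+i)$. Since $r2^r+i$ lies in the bottom ``layer-$0$'' block, this row has exactly two nonzero entries, both equal to $1$, in the columns $(r-1)2^{r-1}+i$ and $(2r-1)2^{r-1}+i$; these are its two layer-$1$ neighbours, and they lie in the two ``layer'' ranges $\{(r-1)2^{r-1}+1,\dotsc,r2^{r-1}\}$ and $\{(2r-1)2^{r-1}+1,\dotsc,r2^r\}$ of the two diagonal $A_{r-1}$-blocks. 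I would also rewrite the hypothesis~\eqref{eq:hypo} in the homogeneous form~\eqref{eq:hypo_a}, namely $\sum_{j\in K^+}A_{r-2}(j)=\sum_{j\in K'^-}A_{r-2}(j)$, as this is the dependence to be transplanted twice.

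First I would evaluate the $\tilde K_1$-part $\sum_{j\in\tilde K^+_1}A_r(j)-\sum_{j\in\tilde K^-_1}A_r(j)$. By~\eqref{eq:K1plus} and~\eqref{eq:K1minus} the sets $\tilde K^\pm_1$ are the translates of $K'^-$ and $K^+$ by $(r-1)2^{r-2}$ and by $(3r-1)2^{r-2}$, which deposit two signed copies of~\eqref{eq:hypo_a} into the second and fourth diagonal $A_{r-2}$-blocks of Figure~\ref{fig:recursion_2}. On every column of those two blocks the two copies cancel by~\eqref{eq:hypo_a}, and because the internal $\BF(r-2)$-edges are confined to the blocks, the only rows that can contribute anything outside are the layer-$0$ rows of the embedded copies, i.e.\ those with sub-index $>(r-2)2^{r-2}$; their contributions travel through the identity blocks into the two layer ranges. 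Hence the $\tilde K_1$-part is a $\{0,1,-1\}$-vector $\vect x$ supported on those two ranges. Reading off the identity blocks of Figure~\ref{fig:recursion_2} --- and using~\eqref{eq:large_enough}, which says $(r-2)2^{r-2}+i$ is the only selected layer-$0$ row in the relation --- confirms the two claims made just before the lemma: $x_k=1$ for $k\in\{(r-1)2^{r-1}+i,\,(2r-1)2^{r-1}+i\}$, matching the two one-entries of $A_r(r2^r+i)$.

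It then remains to cancel the other nonzero entries of $\vect x$, all of which lie in the two layer ranges and arise from the rows $j\in K^+\cup K^-$ with $j>(r-2)2^{r-2}$. This is the purpose of $\tilde K_2$: each index $j+(3r+4)2^{r-2}$ or $j+(3r+5)2^{r-2}$ in~\eqref{eq:K2plus}--\eqref{eq:K2minus} is a layer-$0$ vertex lying among the non-selected bottom rows $r2^r+J_{r+1}+1,\dotsc,(r+1)2^r$ --- exactly the containment verified at the end of the proof of Lemma~\ref{lem:induction_3} --- so adding or subtracting it only affects layer-$1$ columns. The asymmetric choice in~\eqref{eq:K2minus}, taking $K^-$ for the shift $(3r+4)2^{r-2}$ but $K'^-$ for the shift $(3r+5)2^{r-2}$ (together with the sign conventions built into $\tilde K^\pm_2$), is tuned so that $\sum_{j\in\tilde K^+_2}A_r(j)-\sum_{j\in\tilde K^-_2}A_r(j)=A_r(r2^r+i)-\vect x$. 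Adding this to the $\tilde K_1$-part gives $\vect x+\bigl(A_r(r2^r+i)-\vect x\bigr)=A_r(r2^r+i)$, which is~\eqref{eq:punchline}.

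I expect the genuine obstacle to be the bookkeeping hidden in the last two paragraphs: reading from Figure~\ref{fig:recursion_2} precisely which identity block carries which layer-$0$ row of a diagonal $A_{r-2}$-block into which layer-$1$ column, and keeping the parity- and range-dependent data consistent (the splitting $j>(r-2)2^{r-2}$, the two distinct translations $(3r+4)2^{r-2}$ versus $(3r+5)2^{r-2}$, and the differing roles of $K^-$ and $K'^-$) with the orientation of the butterfly edges. I would organise this by fixing a column index $k$, treating each of the relevant column ranges of Figure~\ref{fig:recursion_2} separately, and checking in each that the $\tilde K_1$- and $\tilde K_2$-contributions add up to the corresponding entry of $A_r(r2^r+i)$; the worked instance $r=4$, $i=1$ in Figure~\ref{fig:induction} provides a running consistency check.
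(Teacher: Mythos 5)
Your proposal follows the paper's proof essentially verbatim: the paper likewise splits the right-hand side into the $\tilde K_1$ contribution $\vect x$ and reduces~\eqref{eq:punchline} to $\vect x=\vect y$ where $\vect y = A_r(r2^r+i)-\sum_{j\in\tilde K^+_2}A_r(j)+\sum_{j\in\tilde K^-_2}A_r(j)$, both vectors being supported on the two layer-one column ranges. The bookkeeping you defer is organised in the paper by noting that $\vect x$ and $\vect y$ each take a common value on the four translates $(r-1)2^{r-1}+j$, $(r-1)2^{r-1}+2^{r-2}+j$, $(2r-1)2^{r-1}+j$, $(2r-1)2^{r-1}+2^{r-2}+j$ of each $j\in\{1,\dotsc,2^{r-2}\}$ (for $\vect y$ after replacing the row $r2^r+i$ by the equal row $r2^r+2^{r-1}+i$), and that this common value is $1$, $-1$ or $0$ according to whether $(r-2)2^{r-2}+j$ lies in $K'^-$, in $K^+$, or in neither.
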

\begin{proof}
Setting
\begin{align*}
\vect x &= \sum_{j\in \tilde K^+_1}A_r(j)-\sum_{j\in\tilde K^-_1}A_r(j),& 
\vect y &= A_r(r2^r+i)-\sum_{j\in \tilde K^+_2}A_r(j)+\sum_{j\in \tilde K^-_2}A_r(j) 
\end{align*}
equation~\eqref{eq:punchline} is equivalent to $\vect x=\vect y$. From~\eqref{eq:box} and~\eqref{eq:hypo} it follows that
\[\supp(\vect x)\subseteq\left\{(r-1)2^{r-1}+1,\dots,r2^{r-1}\right\}\cup\left\{(2r-1)2^{r-1}+1,\dots,r2^{r}\right\},\]
and by construction, for every $j\in\{1,\dotsc,2^{r-2}\}$, 
\[\vect x\left((r-1)2^{r-1}+j\right)=\vect x\left((r-1)2^{r-1}+2^{r-2}+j\right)=\vect x\left((2r-1)2^{r-1}+j\right)=x\left((2r-1)2^{r-1}+2^{r-2}+j\right).\]
Denoting this value by $\tilde x(j)$, we have
\begin{equation}\label{eq:char_x}
\tilde{\vect x}(j) =
\begin{cases}
  1 & \text{if }(r-2)2^{r-2}+j\in K'^-,\\
  -1 & \text{if }(r-2)2^{r-2}+j\in K^+,\\
  0 & \text{otherwise}.
\end{cases}
\end{equation}
From~\eqref{eq:K2plus} and~\eqref{eq:K2minus} it follows that
\[\tilde K^+_2\cup\tilde
K^-_2\cup\left\{(2r+1)2^{r-1}+i\right\}\subseteq\left\{(2r+1)2^{r+1}+1,\dots,(r+1)2^r\right\},\]
and therefore
\[\supp(\vect
y)\subseteq\left\{(r-1)2^{r-1}+1,\dots,r2^{r-1}\right\}\cup\left\{(2r-1)2^{r-1}+1,\dots,r2^{r}\right\},\]
After replacing $A_r\left(r2^r+i\right)$ by $A_r\left(r2^r+2^{r-1}+i\right)$ (which we can do since
the two rows are equal), the rows contributing to $\vect y$ come in pairs $\left(j,j+2^{r-2}\right)$
where both rows in each pair have the same sign in $\vect y$. Therefore,  
\[\vect y\left((r-1)2^{r-1}+j\right)=y\left((r-1)2^{r-1}+2^{r-2}+j\right)=\vect
  y\left((2r-1)2^{r-1}+j\right)=\vect y\left((2r-1)2^{r-1}+2^{r-2}+j\right).\]
For $j\in\{1,\dotsc,2^{r-2}\}$ we have $\vect y\left((r-1)2^{r-1}+j\right)=1$ if and only if
$(2r+1)2^{r-1}+j=j'+(3r+4)2^{r-1}$ for some $j'\in K'^-$, or equivalently $j'=(r-2)2^{r-2}+j\in
K'^-$. Similarly, we have $\vect y\left((r-1)2^{r-1}+j\right)=-1$ if and only if $j'=(r-2)2^{r-2}+j\in
K^+$, and comparing this with~\eqref{eq:char_x} we conclude $\vect x=\vect y$, as required.
\end{proof}
\begin{proof}[Proof of Proposition~\ref{prop:lower_bound}]
The statement follows by induction with
base~\eqref{eq:LB_base_1}--\eqref{eq:LB_base_8}, using
Lemmas~\ref{lem:induction_1},~\ref{lem:induction_2},~\ref{lem:induction_3} and~\ref{lem:induction_4} for the induction
step.  
\end{proof}
Finally, Theorem~\ref{thm:main_result} is a consequence of Propositions~\ref{prop:lower_bound} and~\ref{prop:upper_bound}.

\section{Additional comments}\label{sec:comments}
For a graph $G=(V,E)$ and a zero-forcing set $S\subseteq V$, the \emph{propagation time} $\pt(S)$
has been defined in~\cite{HogbenHuynhKingsleyMeyerWalkerYoung-2012-Propagationtimezero} as the
length $m$ of the increasing sequence $S=S_0\subsetneq S_1\subseteq\cdots\subsetneq S_m=V$, where
\[S_i=S_{i-1}\cup\left\{w\ :\ \{w\}=N(v)\cap S_{i-1}\text{ for some }v\in
    S_{i-1}\right\}\quad\text{for }i=1,2\dotsc\,.\] The propagation time of $\pt(G)$ of the graph
$G$ is the minimum of the propagation times $\pt(S)$ over all minimum zero-forcing sets $S$. The
construction in Section~\ref{sec:upper_bound} gives the upper bound $\pt(\BF(r))\leqslant 2r$, and
we leave it as an open problem to determine the propagation time of $\BF(r)$.  As mentioned in the
introduction, a concept closely related to zero-forcing, called \emph{power domination}, was
introduced in~\cite{Haynes2002a}. A vertex set $S\subseteq V$ is called power dominating if the
closed neighborhood $N[S]=S\cup\{w\,:\,vw\in E\text{ for some }v\in S\}$ is a zero forcing set. It
was shown in~\cite{Benson.etal_2015_Powerdominationzero} that $Z(G)/\Delta$ provides a lower bound
for the size of a power dominating set in $G$ where $\Delta$ is the maximum degree of $G$. This
implies that the power domination number of the butterfly network $\BF(r)$, that is, the minimum
size of a power dominating set, is at least
\[\left\lceil\frac{1}{36}\left[(3r+7)2^r+2(-1)^r\right]\right\rceil.\]
This bound does not appear to be tight and we leave for future work the problems of finding the
power domination number of the butterfly network as well as its \emph{power propagation time} which is defined
in~\cite{Ferrero2016} as
\[\ppt(G)=1+\min\{\pt\left(N[S]\right)\,:\,S\text{ is a minimum power dominating set in $G$}\}.\]

\subsection*{Acknowledgement}
We would like to thank an anonymous reviewer for carefully reading a previous version of the paper
and providing a large number of insightful comments which were incredibly helpful in clarifying the
presentation of our arguments.

\providecommand{\bysame}{\leavevmode\hbox to3em{\hrulefill}\thinspace}
\providecommand{\MR}{\relax\ifhmode\unskip\space\fi MR }
\providecommand{\MRhref}[2]{%
  \href{http://www.ams.org/mathscinet-getitem?mr=#1}{#2}
}
\providecommand{\href}[2]{#2}


\begin{thebibliography}{10}

\bibitem{AIM-2008-Zeroforcingsets}
{AIM Minimum Rank -- Special Graphs Work Group (F. Barioli, W. Barrett, S.
  Butle, S. M. Cioab\u{a}, D. Cvetkovi\'c, S. M. Fallat, C. Godsil, W. Haemers,
  L. Hogben, R. Mikkelson, S. Narayan, O. Pryporova, I. Sciriha, W. So, D.
  Stevanovi\'c, H. van der Holst, K. Van der Meulen, A. Wangsness)}, \emph{Zero
  forcing sets and the minimum rank of graphs}, Linear Algebra and its
  Applications \textbf{428} (2008), no.~7, 1628--1648.

\bibitem{Baldwin1993}
Thomas~L. Baldwin, Lamine Mili, Monte~B. Boisen, and Ram Adapa, \emph{Power
  system observability with minimal phasor measurement placement}, {IEEE}
  Transactions on Power Systems \textbf{8} (1993), no.~2, 707--715.

\bibitem{Benson.etal_2015_Powerdominationzero}
Katherine~F. {Benson}, Daniela {Ferrero}, Mary {Flagg}, Veronica {Furst},
  Leslie {Hogben}, Violeta {Vasilevska}, and Brian {Wissman}, \emph{Zero
  forcing and power domination for graph products}, Australasian Journal of
  Combinatorics \textbf{70} (2017), no.~2, 221--235.

\bibitem{Bienstock1991}
Daniel Bienstock, \emph{Graph searching, path-width, tree-width and related
  problems (a survey)}, Reliability of computer and communication networks
  (Fred~Stephen Roberts, Frank~K. Hwang, and Clyde~L. Monma, eds.), DIMACS
  Series in Discrete Mathematics and Theoretical Computer Science, vol.~5,
  1991, pp.~33--49.

\bibitem{Bienstock1991a}
Daniel Bienstock and Paul Seymour, \emph{Monotonicity in graph searching},
  Journal of Algorithms \textbf{12} (1991), no.~2, 239--245.

\bibitem{Burgarth2007}
Daniel Burgarth and Vittorio Giovannetti, \emph{Full control by locally induced
  relaxation}, Physical Review Letters \textbf{99} (2007), no.~10, 100501.

\bibitem{Dendris1997}
Nick~D. Dendris, Lefteris~M. Kirousis, and Dimitrios~M. Thilikos,
  \emph{Fugitive-search games on graphs and related parameters}, Theoretical
  Computer Science \textbf{172} (1997), no.~1-2, 233--254.

\bibitem{Dobrev2006}
Stefan Dobrev, Paola Flocchini, Rastislav Kr{\'{a}}lovi{\v{c}}, Peter
  Ru{\v{z}}i{\v{c}}ka, Giuseppe Prencipe, and Nicola Santoro, \emph{Black hole
  search in common interconnection networks}, Networks \textbf{47} (2006),
  no.~2, 61--71.

\bibitem{Dyera}
Danny Dyer, Boting Yang, and \"Oznur Ya{\c{s}}ar, \emph{On the fast searching
  problem}, Algorithmic Aspects in Information and Management, Lecture Notes in
  Computer Science, Springer Berlin Heidelberg, 2008, pp.~143--154.

\bibitem{FallatHogben-2007-minimumranksymmetric}
Shaun~M. Fallat and Leslie Hogben, \emph{The minimum rank of symmetric matrices
  described by a graph: A survey}, Linear Algebra and its Applications
  \textbf{426} (2007), no.~2-3, 558--582.

\bibitem{Fazel2004}
Maryam Fazel, Haitham Hindi, and Stephen Boyd, \emph{Rank minimization and
  applications in system theory}, Proceedings of the 2004 American Control
  Conference, vol.~4, IEEE, 2004, pp.~3273--3278.

\bibitem{Ferrero2016}
Daniela Ferrero, Leslie Hogben, Franklin H.~J. Kenter, and Michael Young,
  \emph{Note on power propagation time and lower bounds for the power
  domination number}, Journal of Combinatorial Optimization \textbf{34} (2016),
  no.~3, 736--741.

\bibitem{Haynes2002a}
Teresa~W. Haynes, Sandra~M. Hedetniemi, Stephen~T. Hedetniemi, and Michael~A.
  Henning, \emph{Domination in graphs applied to electric power networks},
  {SIAM} J. Discrete Math. \textbf{15} (2002), no.~4, 519--529.

\bibitem{HogbenBarrettGroutHolstRasmussenSmith.-2016-AIMminimumrank}
Leslie Hogben, Wayne Barrett, Jason Grout, Hein van~der Holst, Kaela Rasmussen,
  and Andrew Smith., \emph{{AIM} minimum rank graph catalog}, 2016,
  \url{http://admin.aimath.org/resources/graph-invariants/minimumrankoffamilies/\#/cuig}.

\bibitem{HogbenHuynhKingsleyMeyerWalkerYoung-2012-Propagationtimezero}
Leslie Hogben, My~Huynh, Nicole Kingsley, Sarah Meyer, Shanise Walker, and
  Michael Young, \emph{Propagation time for zero forcing on a graph}, Discrete
  Applied Mathematics \textbf{160} (2012), no.~13-14, 1994--2005.

\bibitem{HuangChangYeh2010}
Liang-Hao Huang, Gerard~J. Chang, and Hong-Gwa Yeh, \emph{On minimum rank and
  zero forcing sets of a graph}, Linear Algebra and its Applications
  \textbf{432} (2010), no.~11, 2961--2973.

\bibitem{Kirousis2000}
Lefteris~M. Kirousis, Evangelos Kranakis, Danny Krizanc, and Yannis~C.
  Stamatiou, \emph{Locating information with uncertainty in fully
  interconnected networks}, International Symposium on Distributed Computing,
  Lecture Notes in Computer Science, vol. 1914, Springer Berlin Heidelberg,
  2000, pp.~283--296.

\bibitem{Severini2008}
Simone Severini, \emph{Nondiscriminatory propagation on trees}, Journal of
  Physics A: Mathematical and Theoretical \textbf{41} (2008), no.~48, 482002.

\bibitem{Yang2013}
Boting Yang, \emph{Fast{\textendash}mixed searching and related problems on
  graphs}, Theoretical Computer Science \textbf{507} (2013), 100--113.

\end{thebibliography}
\end{document}